\theoremstyle{definition}
\theoremstyle{plain}
\newtheorem{theorem}{\bf Theorem}[section]
\newtheorem{proposition}[theorem]{\bf Proposition}
\newtheorem{claim}{\bf Claim}[section]
\newtheorem{lemma}[theorem]{\bf Lemma}
\newtheorem{corollary}[theorem]{\bf Corollary}
\numberwithin{figure}{section}
\theoremstyle{observation}
\newtheorem{observation}[theorem]{\bf Observation}
\title{The restrained double Roman domination\\ and graph operations}
\author{\small {Zhipeng Gao$^{1}$, Changqing Xi$^{1}$, Jun Yue$^{2}$}\\[5mm]
{\small $^1$Center for Combinatorics and LPMC, Nankai University, Tianjin, China}\\
{\small $^2$ School of Mathematics and Statistics}\\
{\small Shandong Normal University, Jinan, Shandong 250358, P.R. China}\\
{\small Emails: 1120180009@mail.nankai.edu.cn; xcqmath@163.com; yuejun06@126.com}\\
}
\date{}
\begin{document}\maketitle
\begin{abstract}
Let $G=(V(G),E(G))$ be a simple graph. A restrained double Roman dominating function (RDRD-function) of $G$ is a function $f: V(G) \rightarrow \{0,1,2,3\}$ satisfying the following properties: if $f(v)=0$, then the vertex $v$ has at least two neighbours assigned 2 under $f$ or one neighbour $u$ with $f(u)=3$; and if $f(v)=1$, then the vertex $v$ must have one neighbor $u$ with $f(u) \geq 2$; the induced graph by vertices assigned 0 under $f$ contains no isolated vertex. The weight of a RDRD-function $f$ is the sum $f(V)=\sum_{v \in V(G)} f(v)$, and the minimum weight of a RDRD-function on $G$ is the restrained double Roman domination number (RDRD-number) of $G$, denoted by $\gamma_{rdR}(G)$. In this paper, we first prove that the problem of computing RDRD-number is NP-hard even for chordal graphs. And then we study the impact of some graph operations, such as strong product, cardinal product and corona with a graph, on restrained double Roman domination number.

\medskip\noindent
\textbf{AMS Math.\ Subj.\ Class.\ (2010)}: 05C69, 05C07
\vskip 2mm\noindent\textbf{Keywords}: double Roman domination, restrained double Roman domination number, graph operations.
\end{abstract}

\section{Introduction}
Let $G=(V(G),E(G))$ be a simple graph with vertex set $V(G)$ and edge set $E(G)$. We always denote $|V(G)|$ and $|E(G)|$ by order $n$ and size $m$, respectively. 
For any vertex $v\in V(G)$, the \emph{open neighborhood} of $v$ is the set $\{u|(u,v)\in E\}$, denoted by $N(v)$. The \emph{degree} of a vertex $v\in V(G)$ is $d(v)=|N(v)|$ stands for the number of vertices in $N(v)$. The minimum degree (resp., maximum degree) among the vertices of $G$ is denoted by $\delta(G)$ (resp., $\Delta(G)$). 
A vertex $v$ with $d(v)=1$ is a \emph{leaf}, and its neighbor is called a \emph{support vertex}. If there are at least two leaves in the neighborhood of a vertex $u$, then $u$ is called a \emph{strong support vertex}. A \emph{wounded spider} $ws(1,n,t)$ is a tree by subdividing $t$ edges of the star $K_{1,n}$, where $0\le t\le n-1$. For more graph theoretic terminology and notations not given here, the reader is referred to \cite{West}.

A subset $D$ of $V(G)$ is a \emph{domination set} if every vertex outside $D$ has a neighbor in $D$. The \emph{domination number} of $G$, denoted by $\gamma(G)$, is the minimum cardinality of a domination set of $G$. A domination set $D$ of $G$ is a \emph{restrained domination set} if every vertex outside $D$ is adjacent to another vertex in $V(G)\setminus D$.
The \emph{restrained domination number} of $G$, denoted by $\gamma_{r}(G)$, is the minimum cardinality of a restrained domination set in $G$. The concept of the restrained domination was formally introduced in \cite{Domke}. Since then, the variants of restrained domination have already been studied, the details refer to \cite{Chen11,RoushiniLeely1,RoushiniLeely2,Samadi}.


Double Roman domination is a stronger version of Roman domination, which was introduced in \cite{Beeler}. A \emph{double Roman dominating function} (for short, DRD-function) of $G$ is a function $f: V(G) \rightarrow \{0,1,2,3\}$ satisfying the following conditions: if $f(v)=0$, then the vertex $v$ has at least two neighbours assigned 2 under $f$ or one neighbour $u$ with $f(u)=3$; and if $f(v)=1$, then the vertex $v$ must have one neighbor $u$ with $f(u) \geq 2$. At the same time, the DRD-function satisfying the subgraph induced by vertices assigned 0 under $f$ contains no isolated vertex, we call it a \emph{restrained double Roman dominating function} (RDRD-function) of $G$. Note that if any vertex $v\in V(G)$ satisfying the above conditions is said to be \emph{restrained double Roman dominated}. The weight of an RDRD-function $f$ is the sum $f(V)=\sum_{v \in V(H)} f(v)$, and the minimum weight of an RDRD-function on $G$ is the \emph{restrained double Roman domination number} (RDRD-number) of $G$, denoted by $\gamma_{rdR}(G)$. For the sake of convenience, an RDRD-function $f$ of a graph $G$ with weight $\gamma_{rdR}(G)$ is called a $\gamma_{rdR}(G)$-function. Since a restrained double Roman dominating function induces the ordered partition of $V(G)$ with $V_i=\{v\in V(G): f(v)=i\}$ for $i=\{0,1,2,3\}$, we are allowed to write $f=(V_0,V_1,V_2,V_3)$.

The concept of restrained double Roman domination was recently introduced by Mojdeha, Masoumib and Volkmann in \cite{Mojdeha2021}. They proved that computing the value of RDRD-number is NP-Complete for general graphs. Moreover, an upper bound on RDRD-number of a connected graph $G$ was given, and characterization of the graphs achieving the bound was presented. In addition, they compared the parameters between RDRD-number and other domination numbers. And then, Samadi et al. \cite{Samadi1} proved that the problem of computing RDRD-number is NP-hard even for planar graphs, and investigated its relationships with some well-known parameters such as the restrained domination number and the domination number for general graphs, respectively. They also gave the characterization of graphs with small RDRD-numbers, which was independently given by Xi and Yue in \cite{Xi}. In \cite{Xi}, the authors gave a linear time algorithm for computing the RDRD-number of a cograph.

The graph product is vital as the binary operation on graphs. Specially, it is an operation that takes two graphs $G_1$ and $G_2$ and produces a new graph with desired properties. By applying the graph product operations, we can enlarge the smaller graphs, and can also identify and decompose the whole graph with the help of factor graphs to understand the structure and topological properties of the whole. Graph products have various applications such as
computer and communication networks, management of multiprocessors and automata theory\cite{cardinal1}. Now we give some definitions of the graph products as follows. The cardinal product of two graphs $G$ and $H$ (called sometimes direct product, cross product or Kronecker product), denoted by $G\times H$, is the graph with $V(G\times H)=V(G)\times V(H)$ and $(u,v)(u',v')\in E(G\times H)$ if $uu'\in E(G)$ and $vv'\in E(H)$. The strong product of two graphs $G$ and $H$, denoted by $G\boxtimes H$, is the graph with $V(G\boxtimes H)=V(G)\times V(H)$ and $(u,v)(u',v')\in E(G\boxtimes H)$ if, either ($uu'\in E(G)$ and $v=v'$), $(vv'\in E(H)$ and $u=u'$) or ($uu'\in E(G)$ and $vv'\in E(H)$). The corona of two graphs $G$ and $H$, denoted by $G\odot H$, is the graph formed by taking one copy of $G$, $|V(G)|$ copies of $H$, and then joining $i^{th}$ vertex of $V(G)$ to every vertex in the $i^{th}$ copy of $H$. In particular, $G\odot K_1$ is the graph constructed from one copy of $G$, and for each vertex $v\in V(G)$, a new vertex $v'$ as a leaf added.


Vizing's conjecture on Cartesian product graphs has become one of the most interesting problems of domination in graphs since it was introduced \cite{Vizing1963}, and it has motivated a large number of scholars to focus on the domination problems of Cartesian product graphs. Another graph product that provides an interesting and non-trivial problems on domination is the cardinal product. A Vizing-like conjecture for the cardinal product was proposed in \cite{cardinal4}, namely $\gamma(G\times H)\ge\gamma(G)\gamma(H)$. Subsequently, some researchers gave a negative answer to the Vizing-like conjecture. This further reflects the difficulty of studying the cardinal product of graphs. As Sandi Klav\v{z}ar stated in \cite{cardinal4}, ``Although the direct product(cardinal product) of graphs is the most natural graph product, it is also the most difficult and unpredictable among standard graph products".

Several scholars have discussed the properties and the domination number of $P_n\times P_m$. Klobu\v{c}ar \cite{cardinalpp1,cardinalpp2} determined the exact values of the domination number of $P_n\times P_m$ for $2\le n\le 6$, and also presented the bounds of the domination numbers of $P_7\times P_m$ and $P_8\times P_m$. Ch\'{e}rifi et al. \cite{cardinalpp3} designed an algorithm to get the exact values of the domination number of $P_9\times P_m$ for $m\ge8$. Roman domination in Cartesian product graphs and strong product graphs were studied by Yero et al. \cite{strong}. Recently, the total Roman domination number of the graph $G \times H$ was studied by Mart\v{i}nez et al. \cite{cardinal3}. And they also studied the cardinal product graphs $G\times H$ for various specific graphs. Klobu\v{c}ar et al. \cite{cardinal2} focused on the double Roman domination numbers on cardinal products of graphs, and they obtained the exact values of $\gamma_{dR}(P_2\times G)$ for some special graphs $G$. Moreover, the exact values of $\gamma_{dR}(P_k\times P_n)$ were determined, where $k\in\{2,3,4\}$. In addition, they presented the upper and lower bounds for $\gamma_{dR}(P_5\times P_n)$ and $\gamma_{dR}(P_6\times P_n)$. In \cite{Anu2019}, the impact of various graph operations on the double Roman domination number was studied, including cartesian product, addition of twins and corona with a graph. These results have laid the foundation for our research work.

In this paper, we will mainly consider the RDRD-number of graphs. In section 2, we prove that the problem of computing RDRD-number is NP-hard even for chordal graphs. In section 3, we determine the exact values of RDRD-numbers of graphs $P_2\boxtimes P_m$ and $P_3 \boxtimes P_m$, where $m \geq 2$. In section 4, we present an upper bound and a lower bound of RDRD-number of $G\times H$ for any two graphs $G$ and $H$. And then, we obtain the exact values of RDRD-number of $C_3\times C_m$ and $P_2\times G$ for some special graph $G$, where $m \geq 3$. In the last section, we obtain the exact value of RDRD-number of $G\odot H$ when $(H\ncong K_1)$, and present the sharp bounds of $G\odot K_1$. Moreover, the exact values of $\gamma_{rdR}(G\odot K_1)$ are determined, where $G$ is a complete graph, a cycle, a path or a complete bipartite graph.

\section{Complexity Result}
In this section, we will prove that the decision problem associated with RDRD-number is NP-Complete even for chordal graphs. That is stated in the following decision problem, to which we shall refer as RDRD-Problem.

\textbf{RDRD-Problem}:

\textbf{Instance}:Given a chordal graph $G=(V,E)$.

\textbf{Question}: Is there a RDRD-function for $G$ whose weight is at most $k$?

We will show that this problem is NP-Complete by reducing the well-known NP-Complete problem, Exact-3-Cover(X3C), to RDRD-Problem.

\textbf{Exact-3-Cover(X3C)}:

\textbf{Instance}: A finite set $X$ with $|X|=3q$ and a collection $\mathcal{C}$ of 3-element subsets of $X$.

\textbf{Question}: Is there a subcollection $C'$ of $C$ such that every element of $X$ appears in exactly one element of $C'$?

The ideal to prove the following theorem is similar to the proof of Theorem 5 in \cite{ACS}.
\begin{theorem}
RDRD-Problem is NP-Complete for chordal graphs.
\end{theorem}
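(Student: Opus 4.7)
The plan is to follow the classical template for such reductions. First I would check membership in NP, which is routine: given a function $f:V(G)\to\{0,1,2,3\}$, verifying that $f$ is an RDRD-function and has weight at most $k$ can be done in polynomial time by inspecting, for each vertex, its label and the labels of its neighbours, and then checking that the induced subgraph on $V_0$ has no isolated vertex.

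For NP-hardness I would reduce from X3C. Given an instance $(X,\mathcal{C})$ with $|X|=3q$ and $|\mathcal{C}|=t$, I would build a chordal graph $G$ as follows. For every set $C_j\in\mathcal{C}$ introduce a vertex $c_j$, and make $\{c_1,\dots,c_t\}$ a clique (this clique is what buys chordality of the ``upper'' layer). For every element $x_i\in X$ introduce a vertex $x_i$, and for every $C_j$ containing $x_i$ add the edge $x_ic_j$. To force a useful value on each $x_i$ (and to respect the restrained condition), I would attach to each $x_i$ a small chordal gadget, for instance two new vertices $y_i,z_i$ forming a triangle $x_iy_iz_i$, with $y_i$ and $z_i$ each in turn adjacent to an extra pendant pair arranged so that (i) the gadget is chordal, (ii) any RDRD-function must spend a fixed amount $\alpha$ on $\{y_i,z_i,\dots\}$, and (iii) the cheapest way to cover $x_i$ itself is to set $f(x_i)=0$ and to have some neighbour $c_j$ with $f(c_j)=3$ (so that no extra weight must be paid inside the gadget to cover $x_i$). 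Pendants arranged in adjacent pairs take care of the restrained constraint on all the $0$-labelled vertices in the gadget. Chordality of $G$ is then immediate: the $c_j$'s form a clique, each element vertex has at most one neighbour in that clique problematically (any induced cycle through two $c_j$'s is shortcut by the clique edge), and the attached gadgets are triangulated by construction. I would finally set the threshold $k=3q+\alpha\cdot 3q$, i.e.\ $3$ for each of the $q$ ``chosen'' sets plus the mandatory cost inside the $3q$ element gadgets.

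The easy direction is to show that an exact $3$-cover $\mathcal{C}'\subseteq\mathcal{C}$ of size $q$ yields an RDRD-function of weight $k$: set $f(c_j)=3$ for $c_j\in\mathcal{C}'$, $f(c_j)=0$ for the remaining set vertices (they lie in the clique and are thus adjacent to a vertex of value $3$, and they are not isolated in $V_0$ since at least two set vertices take value $0$ whenever $t>q$; corner cases $t=q$ can be absorbed in the construction), $f(x_i)=0$ for every element (covered by its unique chosen $c_j$, with a $0$-neighbour inside its gadget), and assign inside each gadget the values prescribed by its local minimum RDRD-function, costing $\alpha$ per element.

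The harder direction, and the main obstacle, is to argue the converse: any RDRD-function $f$ of weight at most $k$ can be converted into an exact cover. I would first prove a sequence of local reduction lemmas showing that, in any optimal $f$, each element gadget contributes exactly $\alpha$, and for this to be possible $f(x_i)$ must equal $0$ and $x_i$ must be double-Roman-dominated from outside the gadget, i.e.\ from some $c_j$ with $f(c_j)=3$ or from two $c_j$'s of value $2$. A careful weight comparison (replacing any pair of $2$'s in the clique by a single $3$ on a cheaper $c_j$ never increases the weight, and preserves both the double Roman condition at each $x_i$ and the restrained condition inside the $c$-clique provided $t$ is chosen large enough in the construction) lets me assume that every element $x_i$ is covered by exactly one $c_j$ with $f(c_j)=3$. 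The set $\mathcal{C}'=\{C_j:f(c_j)=3\}$ then covers $X$, and the weight budget $k=3q+3q\alpha$ forces $|\mathcal{C}'|\le q$, whence $\mathcal{C}'$ is an exact $3$-cover. The main technical labour lies in choosing the gadget so that these ``cleaning'' moves are actually weight-preserving and compatible with both the restrained and the double Roman constraints simultaneously; this is the step where I expect most care to be needed, and where the analogy with Theorem~5 of \cite{ACS} will be the guiding template.
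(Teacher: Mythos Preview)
Your high-level plan --- reduce from X3C, make the set-vertices a clique for chordality, hang gadgets on the element-vertices --- is exactly the paper's template. The gap is in the execution. Your normalisation step (``replacing any pair of $2$'s in the clique by a single $3$'') does not preserve the double Roman condition: if $x_i$ sees $c_j,c_{j'}$ with $f(c_j)=f(c_{j'})=2$ and you move to $f(c_j)=3$, $f(c_{j'})=0$, then any element adjacent to $c_{j'}$ but not to $c_j$ is now undercovered, so you cannot in general reduce to ``every element is covered by a single $3$''. And the element gadget is never actually built; the desiderata (i)--(iii) you list are in tension, since a $0$ on $x_i$ forces a $0$-neighbour inside the gadget, which in turn needs both domination and its own $0$-neighbour, so pinning the gadget cost at a fixed $\alpha$ while simultaneously ruling out the option of paying for $x_i$ locally is not automatic.

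The paper sidesteps both issues with a different and more economical construction. Each $x_i$ receives only a single pendant $y_i$; each set-vertex $c_j$ is given a twin $c'_j$ with the same neighbourhood in $X$, and all $2t$ set-vertices together form the clique; finally a hub $z$ is joined to the whole clique, with two triangles $zz_1z_2$ and $zz_3z_4$ attached. The threshold is $k=8q+3$. In the intended solution the hub gets $3$, every pendant $y_i$ and each of the $q$ chosen $c_j$'s gets $2$, everything else gets $0$; the twins $c'_j$ supply the $0$-neighbours that the restrained condition demands at the $x_i$'s. For the converse one first shows $f(z)=3$ is forced, so every set-vertex is dominated for free; then, with $a$ the number of pairs $(x_i,y_i)$ of total weight $2$ and $b$ the number of set-vertices of value $2$, the budget gives $a-2b\ge q$ while coverage gives $a\le 3b$, whence $a=3q$, $b=q$ and an exact cover drops out. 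Note that the chosen set-vertices carry value $2$, not $3$; that choice is what makes the arithmetic close.
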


\begin{proof}
RDRD-Problem belongs to $\mathcal{NP}$-class, since it is a polynomial time to decide that a function is a RDRD-function with weight at most $k$. Now we show how to transform any instance of X3C into an instance in RDRD-Problem satisfying that one of them has a solution if and only if the other has a solution.

Let $X=\{x_1,x_2,\ldots,x_{3q}\}$ and $C=\{C_1,C_2,\ldots,C_t\}$ be any arbitrary instance of X3C. We construct a chordal graph $G$ as follows. First, we create a bipartite graph with vertices $x_i$ respect to each $x_i \in X$ and $c_i$ respect to each $C_i \in C$, whose edge $x_ic_j$ if and only if $x_j \in C_j$, for $i \in [3q]$ and $j \in [t]$. Second, in the so far constructed graph, add a pendant vertex $y_i$ to each vertex $x_i$, create a new vertex $c'_j$ which has the same neighbors of $c_j$, where $i \in [3q]$ and $j \in [t]$. And then make $B:=\{c_1,c_2,\ldots,c_t,c'_1,c'_2,\ldots,c'_t\}$ form a clique. Third, add new five vertices $z,z_1,z_2,z_3,z_4$ such that $z$ is adjacent to all vertices in $B$, $z,z_1,z_2$ and $z,z_3,z_4$ are form triangles. It is easy to check that this graph is a chordal graph. An example can be seen in Figure \ref{fig00}, where $X=\{x_1,x_2,\ldots,x_6\}$ and $C=\{\{x_1,x_3,x_5\},\{x_2,x_4,x_6\}\}$.


\begin{figure}[H]
\centering
\includegraphics[scale=0.46]{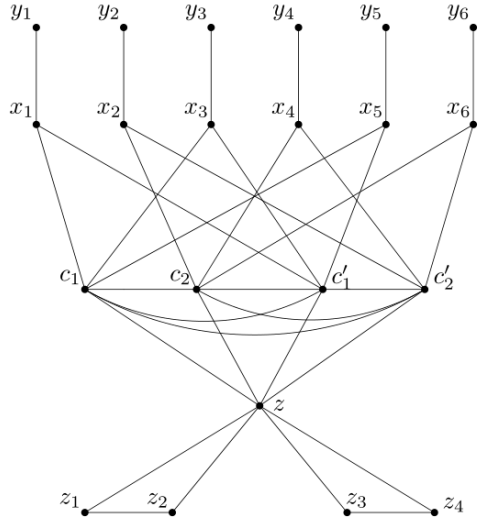}
\caption{NP-Completeness for chordal graphs.}
\label{fig00}
\end{figure}

Let $k=8q+3$. Suppose that the instance $X$, $C$ of X3C has a solution $C_S \subset C$. Let $Y=\{y_1,y_2,\ldots,y_{3q}\}$. We give a RDRD-function $f$ on $G$ as follows.
\begin{equation}{
f(v)=\left\{
\begin{array}{llll} \vspace{0.1cm}
	3,\quad & v = z,\\
	2,\quad &v \in Y\cup C_S,\\
	0,\quad &oterwise.
\end{array}\notag
\right.}
\end{equation}
Clearly, $w(f)= 2\times3q+2\times q+3=8q+3=k$.
	
Conversely, suppose $G$ has a RDRD-function with weight at most $k$. Among all such functions, let $g=(V_0,V_1,V_2,V_3)$ be a $\gamma_{rdR}$-function chosen satisfying the conditions.
\begin{itemize}
\item[(C1)] $|Y\cap V_3|$ is minimized.
\item[(C2)] Subject to Condition (C1): $|B \cap V_3|$ is minimized.
\end{itemize}
Denoted $V_0[B\cup X \setminus z]$ as the vertices labelled by 0 under $g$ in $B\cup X \setminus z$. Since $y_i$ is a pendent vertex and the definition, then $g(y_i) \neq 0$, for $i \in [3q]$. First, we claim that $g(z)=3$ and $g(v)=0$ for $v\in \{z_1,z_2,z_3,z_4\}$. We will prove the claim as follows. If $g(z)\neq 0$ or $g(z)=0$ and $G[V_0[B\cup X \setminus z]]$ has no isolated vertex, then $w(g(z,z_1,z_2,z_3,z_4)) \geq 6$, we define a new RDRD-function $g'$ as reassigning $z$ with $3$, $z_i=0$ for $i \in [4]$. And hence $w(g')<w(g)$ for $G$, a contradiction. Otherwise, there exists only one vertex in $B$ with labelled with 0, say $c'_j$. Now, we consider the weight under $g$ of the set $S=\{c_j,c'_j,x_i,y_i,z,z_1,z_2,z_3,z_4\}$ is at least 10, then we define a new RDRD-funciton $g'$ as reassigning $z=3$, $y_i=2$, $c_j=2$, the remaining vertices in $S$ with $0$. And hence $w(g')<w(g)$ for $G$, a contradiction. Note that every neighbor of $z$ can be defended by $z$, including vertices of $B$. Next, we want to prove that $g(x_i,y_i) \in \{2,3\}$ for every $i \in [3q]$. By the definition of RDRD-function, we know that $g(x_i,y_i) \geq 2$. Suppose $g(x_i,y_i) \geq 4$. If $g(x_i)=3$, then $g(y_i)\geq 1$, and reassigning $x_i$ by 2, a contradiction. If $g(x_i)=2$, then $g(y_i) \geq 2$, and reassigning $y_i$ by $1$, a contradiction. Finally, we claim that $g(v)\in \{0,2\}$ for $v \in B$, and further $g(c_j)=g(c'_j)=2$ is not exist for every $i \in [t]$. If there exists some vertex $v \in B$ satisfying $g(v)=1$, then reassigning $v$ by 0, a contradiction. Now suppose there exists a vertex $v\in B$ with $g(v)=3$ and the three neighbors in $X$ of $v$ is denoted by $x_1,x_2,x_3$. If $y_i=3$ for some $i \in [3]$, then reassigning $y_i=2$, contradiction to (C1). Otherwise, reassigning $v=2$, it is still a RDRD-function, contradiction to (c2). And further, if $g(c_j)=g(c'_j)=2$ for some $j \in [t]$. We redefine $g(c_i)=2$ and $g(c'_i)=0$, then we get a RDRD-function with a smaller weight, a contradiction.

Without loss of generality, we assume that $g(c'_j)=0$ for all $j \in [t]$. Now assume that the number of $g(x_i,y_i)=2$ is $a$, and the weight of each of the other $3q-a$ pairs is $3$. Let $b$ be the number of the vertices labelled by 2 in $B$. Then $$3(3q-a) + 2a + 2b + 3 \le 8q + 3,$$ which implies $a-2b\ge q$. Further, every vertex of $B$ has 3 neighbors in $X$, thus $a \leq 3b$. Note that the fact $a \leq 3q$, we obtain $a = 3q$ and $b = q$. Consequently, $C_S=\{C_j:g(c_j)=2\}$ is an exact cover of $C$.



\end{proof}
\section{Strong Product}
In this section, we first present some bounds on the restrained double Roman domination number of strong product graphs, moreover we determine the exact values of restrained double Roman domination numbers on $P_2\boxtimes P_m$ and $P_3\boxtimes P_m$. To begin with, we recall the following known results.
\begin{theorem}\label{str1}\cite{Imrich}
For any graphs $G$ and $H$, $$max\{P_2(G)\gamma(H),\gamma(G)P_2(H)\}\le\gamma{(G\boxtimes H)}\le 2\gamma(G)\gamma(H),$$ where $P_2(G)$ is a 2-packing of a graph $G$, which is a set of vertices in $G$ that are pair-wise at distance more that 2.
\end{theorem}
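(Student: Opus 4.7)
The plan is to prove the two bounds separately and use the most elementary constructions possible. For the upper bound $\gamma(G\boxtimes H)\le 2\gamma(G)\gamma(H)$, I would show the stronger fact $\gamma(G\boxtimes H)\le \gamma(G)\gamma(H)$ (which trivially implies the stated one). Let $D_G$ be a minimum dominating set of $G$ and $D_H$ a minimum dominating set of $H$. I would verify that $D=D_G\times D_H$ dominates $G\boxtimes H$: given any $(u,v)\in V(G\boxtimes H)\setminus D$, pick $u'\in D_G$ with $u'=u$ or $uu'\in E(G)$, and $v'\in D_H$ with $v'=v$ or $vv'\in E(H)$. By the definition of the strong product, $(u',v')\in D$ is equal or adjacent to $(u,v)$. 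This gives $|D|\le \gamma(G)\gamma(H)$.

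For the lower bound, I would prove $\gamma(G\boxtimes H)\ge P_2(G)\gamma(H)$ and then invoke symmetry to get the bound with $P_2(H)\gamma(G)$. Fix a 2-packing $P\subseteq V(G)$ with $|P|=P_2(G)$; by definition the closed neighborhoods $\{N_G[u]:u\in P\}$ are pairwise disjoint. Let $S$ be any dominating set of $G\boxtimes H$. For each $u\in P$ define
\[
S_u=\{(u',v')\in S:u'\in N_G[u]\},\qquad D_u=\{v'\in V(H):(u',v')\in S_u\}.
\]
The key step is to check that $D_u$ dominates $H$: any $v\in V(H)$ yields a vertex $(u,v)\in V(G\boxtimes H)$ which must be dominated by some $(u',v')\in S$, and by the strong-product adjacency rule $u'\in N_G[u]$ and $v'\in N_H[v]$, so $(u',v')\in S_u$ and $v'\in N_H[v]\cap D_u$. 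Hence $|D_u|\ge\gamma(H)$, and consequently $|S_u|\ge|D_u|\ge\gamma(H)$. Pairwise disjointness of the $N_G[u]$ forces the $S_u$ to be pairwise disjoint subsets of $S$, so
\[
|S|\;\ge\;\sum_{u\in P}|S_u|\;\ge\;P_2(G)\,\gamma(H).
\]
Taking the minimum over all dominating sets $S$ yields the desired inequality, and exchanging the roles of $G$ and $H$ gives the other half of the maximum.

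The main conceptual step is the projection argument for the lower bound, in particular the observation that $2$-packing of $G$ is exactly what is needed to guarantee that the slices $S_u$ arise from disjoint portions of $S$; without the distance-greater-than-$2$ condition the closed neighborhoods could overlap and the same element of $S$ could be counted multiple times. The upper bound is essentially routine once one unpacks the strong-product adjacency, so I expect no real obstacle there beyond bookkeeping.
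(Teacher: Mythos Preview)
The paper does not supply a proof of this theorem; it is simply quoted as a known result from Imrich and Klav\v{z}ar's monograph. Your argument is correct and is essentially the standard one: the product $D_G\times D_H$ of dominating sets dominates the strong product (in fact this gives the sharper bound $\gamma(G\boxtimes H)\le\gamma(G)\gamma(H)$, so the factor $2$ in the stated upper bound is unnecessary), and the lower bound follows from the projection/slicing argument over a maximum $2$-packing, using that pairwise distance greater than~$2$ forces the closed neighborhoods $N_G[u]$, $u\in P$, to be disjoint.
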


\begin{theorem}\label{str2}\cite{Mojdeha2021}
For any connected graph $G$ of order $n\ge3$, $\gamma_{rdR}(G)\le2n-2$.
\end{theorem}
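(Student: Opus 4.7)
The plan is to prove the bound by induction on the order $n$. For the base case $n=3$, the only connected graphs are $P_3$ and $K_3$: on $P_3 = v_1 v_2 v_3$ I would take $f(v_1)=f(v_3)=1$ and $f(v_2)=2$ (each end-vertex sees the centre with value $2$, and $V_0 = \emptyset$), giving weight $4 = 2n-2$; on $K_3$ I would place $3$ at one vertex and $0$ on the other two (which are adjacent, hence not isolated in $G[V_0]$), giving weight $3 \le 2n-2$. These serve as the base cases.

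For the inductive step with $n \ge 4$, I would pick a non-cut vertex $v$ of $G$, which always exists---for instance any leaf of a spanning tree of $G$. Then $G' := G - v$ is a connected graph of order $n-1 \ge 3$, so by the induction hypothesis there is an RDRD-function $f'$ of $G'$ with $f'(V(G')) \le 2(n-1)-2 = 2n-4$. The natural extension is to set $f(v)=2$ and $f(u)=f'(u)$ for every $u \in V(G')$; then $f(V(G)) = f'(V(G')) + 2 \le 2n-2$, so what remains is to verify that $f$ is an RDRD-function of $G$.

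The verification splits into two parts. For the DRD conditions, any vertex $u \in V(G')$ with $f(u) \in \{0,1\}$ retains, inside $G$, the same $G'$-neighbours that witnessed its RDR-domination under $f'$, and with the same $f$-values, so its defining condition transfers; the new vertex $v$ carries $f(v)=2$ and imposes no condition of its own. For the restrained condition, since $v \notin V_0$ we have $V_0(G) = V_0(G')$ and the subgraph of $G$ induced on $V_0$ coincides with that of $G'$, so by the inductive hypothesis no isolated vertex appears in $G[V_0]$.

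The hard part, in my view, is precisely the restrained requirement: extending an RDRD-function across a newly added vertex could in principle create an isolated vertex in $G[V_0]$, and a priori one worries that the attached vertex might be needed to keep some $V_0$ pair connected. Assigning $v$ the value $2$ neatly side-steps this concern because it keeps $v$ outside $V_0$ and deletes no $V_0$-edges, so the restrained property transfers for free; a smaller choice such as $f(v)\in\{0,1\}$ would require delicate case analysis on the $f'$-values of $v$'s neighbours and could actually fail when $v$ is a leaf whose unique neighbour already carries a small value.
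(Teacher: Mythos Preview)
Your induction proof is correct. The base cases for $P_3$ and $K_3$ are verified explicitly, and the inductive step is sound: choosing a non-cut vertex $v$ (any leaf of a spanning tree works), applying the hypothesis to the connected graph $G-v$ of order $n-1\ge 3$, and extending the resulting RDRD-function by $f(v)=2$ adds exactly $2$ to the weight while preserving both the DRD conditions (every vertex of $G'$ keeps its witnessing neighbours with unchanged values) and the restrained condition (since $v\notin V_0$ we have $V_0^f=V_0^{f'}$ and $G[V_0]=G'[V_0]$, so no isolated vertex can appear). Your closing remark about why assigning $f(v)=2$ rather than $0$ or $1$ is the right choice is also accurate.

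There is, however, nothing to compare against: the paper does not prove this theorem at all. Theorem~\ref{str2} is simply quoted from \cite{Mojdeha2021} and used as a black box (in Observation~\ref{ob1} and in Section~4). Your argument therefore supplies a clean, self-contained justification that the paper omits.
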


\begin{observation}\label{ob1}
Let $G$ and $H$ be the connected graphs of order $n$ and $m$, respectively. Then $$2max\{P_2(G)\gamma(H),\gamma(G)P_2(H)\}\le\gamma_{rdR}{(G\boxtimes H)}\le 2nm-2.$$
\end{observation}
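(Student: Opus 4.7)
The plan is to derive both inequalities by reducing to results already stated in the excerpt, namely Theorem \ref{str1} for the lower bound and Theorem \ref{str2} for the upper bound. The only piece not directly supplied by the cited results is the general inequality $\gamma_{rdR}(G')\ge 2\gamma(G')$ for an arbitrary graph $G'$, so I would begin by establishing this auxiliary fact.

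For the lower bound, given any RDRD-function $f=(V_0,V_1,V_2,V_3)$ on a graph $G'$, I claim that $D:=V_2\cup V_3$ is a dominating set. Indeed, each $v\in V_0$ has either a neighbour in $V_3$ or two neighbours in $V_2$, each $v\in V_1$ has a neighbour of value at least $2$, and the vertices of $D$ dominate themselves. Therefore $|V_2|+|V_3|\ge\gamma(G')$, and the weight satisfies
\[
f(V)=|V_1|+2|V_2|+3|V_3|\ge 2(|V_2|+|V_3|)\ge 2\gamma(G').
\]
Applied to $G'=G\boxtimes H$ and combined with the left inequality of Theorem \ref{str1}, this yields
\[
\gamma_{rdR}(G\boxtimes H)\ge 2\gamma(G\boxtimes H)\ge 2\max\{P_2(G)\gamma(H),\gamma(G)P_2(H)\},
\]
which is the claimed lower bound.

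For the upper bound, I would invoke Theorem \ref{str2} directly on $G\boxtimes H$. Two standard observations justify this: the strong product of connected graphs is connected, and its order is $nm$. Assuming $nm\ge 3$ (the only excluded case is $n=m=1$, where the statement is vacuous anyway), Theorem \ref{str2} gives $\gamma_{rdR}(G\boxtimes H)\le 2nm-2$.

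The whole argument is therefore a short two-step deduction, and the main (modest) obstacle is simply writing down the inequality $\gamma_{rdR}(G')\ge 2\gamma(G')$ cleanly; once that auxiliary bound is in hand, both halves of the observation follow immediately from the previously stated theorems.
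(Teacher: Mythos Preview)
Your proposal is correct and follows essentially the same approach as the paper: the lower bound via $\gamma_{rdR}(G')\ge 2\gamma(G')$ combined with Theorem~\ref{str1}, and the upper bound via Theorem~\ref{str2} applied to the connected graph $G\boxtimes H$ of order $nm$. The only cosmetic difference is that the paper invokes the known chain $2\gamma(G)\le\gamma_{dR}(G)\le\gamma_{rdR}(G)$ directly, whereas you supply a short self-contained proof of $\gamma_{rdR}(G')\ge 2\gamma(G')$ by observing that $V_2\cup V_3$ is a dominating set.
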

\begin{proof}
Recall the inequality $2\gamma(G)\le\gamma_{dR}(G)\le\gamma_{rdR}(G)$ for any graph $G$ and Theorem \ref{str1}, the lower bound can be obtained. Further note $V(G\boxtimes H)=|V(G)||V(H)|=nm$ and Theorem \ref{str2}, the upper bound is an immediate consequence.
\end{proof}

\begin{lemma}\label{str3}
For any connected graphs $G$ and $H$, let $f_1=(A_0,A_1,A_2,A_3)$ be a $\gamma_{rdR}(G)$-function and let $f_2=(B_0,B_1,B_2,B_3)$ be a $\gamma_{rdR}(H)$-function. Then
\begin{align}
\nonumber\gamma_{rdR}(G\boxtimes H)&\le\gamma_{rdR}(G)\gamma_{rdR}(H)-6|A_3||B_3|-3|A_3||B_2|-2|A_3||B_1|-3|A_2||B_3|-2|A_2||B_2|-\\
\nonumber &\hspace{7mm}|A_2||B_1|-|A_1||B_2|-2|A_1||B_3|.
\end{align}
\end{lemma}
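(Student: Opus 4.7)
The plan is to exhibit an explicit RDRD-function $f$ on $G\boxtimes H$, built pointwise from $f_1$ and $f_2$, whose weight matches the right-hand side of the stated bound. Starting from the identity
\[\gamma_{rdR}(G)\gamma_{rdR}(H)=\sum_{i,j\in\{0,1,2,3\}} ij\,|A_i||B_j|\]
and inspecting the subtracted terms, the remaining coefficients on each $|A_i||B_j|$ force $f(u,v)$ to depend only on the pair $(f_1(u),f_2(v))$ in the following way: set $f(u,v)=0$ whenever $f_1(u)=0$ or $f_2(v)=0$; set $f(u,v)=1$ whenever $\min\{f_1(u),f_2(v)\}=1$; and set $f(u,v)=\max\{f_1(u),f_2(v)\}$ whenever $\min\{f_1(u),f_2(v)\}\ge 2$. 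A direct tabulation over the nine pairs $(i,j)\in\{1,2,3\}^2$ confirms that $\sum_{(u,v)\in V(G\boxtimes H)} f(u,v)$ matches the claimed right-hand side.

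The substantive part is verifying that this $f$ is an RDRD-function of $G\boxtimes H$. Note $f(u,v)=0$ iff $f_1(u)=0$ or $f_2(v)=0$, and $f(u,v)=1$ iff $\min\{f_1(u),f_2(v)\}=1$. The essential structural feature I would exploit is that in $G\boxtimes H$ the neighbours of $(u,v)$ include, besides the horizontal copies $(u^*,v)$ with $u^*\in N_G(u)$ and the vertical copies $(u,v^*)$ with $v^*\in N_H(v)$, all ``diagonal'' pairs $(u^*,v^*)$ with $u^*\in N_G(u)$ and $v^*\in N_H(v)$.

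For the zero-condition I would assume, WLOG, $f_1(u)=0$, so that $u$ admits either one defender $u^*\in N_G(u)$ with $f_1(u^*)=3$, or two defenders $u^*,u^{**}\in N_G(u)$ each with $f_1$-value $2$, and then split on $f_2(v)$. When $f_2(v)\ge 2$ the horizontal neighbours $(u^*,v)$ (and $(u^{**},v)$, if applicable) already satisfy the required property by the $\max$ rule; when $f_2(v)\le 1$ I would pass to diagonal neighbours $(u^*,v^*)$ where $v^*$ is an appropriate defender of $v$ in $H$ (which exists since $f_2$ is itself an RDRD-function). The one-condition is similar but easier: $f(u,v)=1$ forces, WLOG, $f_1(u)=1$, and one finds a product neighbour of $(u,v)$ with $f$-value $\ge 2$ by combining a defender $u^*$ of $u$ in $G$ with either $v$ itself (if $f_2(v)\ge 2$) or a defender $v^*$ of $v$ in $H$. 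The restrained condition is immediate: if $f_1(u)=0$, the restraint condition on $f_1$ yields $u_0\in N_G(u)$ with $f_1(u_0)=0$, so $(u_0,v)$ is a product neighbour of $(u,v)$ also labelled $0$.

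I expect the bookkeeping in the zero-condition, specifically the sub-case $f_1(u)=f_2(v)=0$ where both defender sets consist of pairs of $2$'s, to be the main obstacle: there one must exhibit two distinct diagonal neighbours labelled $2$ (rather than a single $3$) among the four candidates $(u^*,v^*),(u^*,v^{**}),(u^{**},v^*),(u^{**},v^{**})$, and check that the $\max$ rule correctly upgrades some diagonal neighbour to label $3$ whenever any available defender in $G$ or $H$ carries a $3$. Once this notation-heavy but routine case analysis is complete, the bound follows immediately from the weight computation above.
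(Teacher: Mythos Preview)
Your construction is exactly the paper's: describing $f$ via $\min$ and $\max$ gives precisely the case table the paper writes out, and your weight tabulation and domination/restraint verification follow the same route, with your case analysis in fact filling in details the paper only sketches. The proposal is correct and essentially identical to the paper's proof.
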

\begin{proof}
We define the function $f$ on $G\boxtimes H$ as follows.
\begin{equation}{
f(u,v)=\left\{
\begin{array}{llll} \vspace{0.1cm}
3,                 &\hspace{2mm}(u,v)\in(A_3\times B_3)\cup(A_2\times B_3)\cup(A_3\times B_2),\\
2,                 &\hspace{2mm}(u,v)\in(A_2\times B_2),\\
1,                 &\hspace{2mm}(u,v)\in(A_1\times B_1)\cup(A_1\times B_2)\cup(A_1\times B_3)\cup(A_2\times B_1)\cup(A_3\times B_1),\\
0,                 &\hspace{2mm}otherwise.
\end{array}\notag
\right.}
\end{equation}
Note that the set $(A_0\times B_0)\cup(A_0\times B_1)\cup(A_1\times B_0)\cup(A_1\times B_1)$ can be restrained double Roman dominated by $(A_2\times B_2)\cup(A_2\times B_3)\cup(A_3\times B_2)\cup(A_3\times B_3)$, the set $(A_0\times B_2)\cup(A_1\times B_2)$ can be restrained double Roman dominated by $(A_2\times B_2)\cup(A_3\times B_2)$, the set $(A_0\times B_3)\cup(A_1\times B_3)$ can be restrained double Roman dominated by $(A_2\times B_3)\cup(A_3\times B_3)$, the set $(A_2\times B_0)\cup(A_2\times B_1)$ can be restrained double Roman dominated by $(A_2\times B_2)\cup(A_2\times B_3)$, and the set $(A_3\times B_0)\cup(A_3\times B_1)$ can be restrained double Roman dominated by $(A_3\times B_2)\cup(A_3\times B_3)$. Thus $f$ is a restrained double Roman domination function on $G\boxtimes H$.
Therefore, we have
\begin{align}
\nonumber \gamma_{rdR}(G\boxtimes H)&\le 3|A_3||B_3|+3|A_2||B_3|+3|A_3||B_2|+2|A_2||B_2|+|A_1||B_1|+|A_1||B_2|+\\
\nonumber &\hspace{7mm}|A_1||B_3|+|A_2||B_1|+|A_3||B_1|\\
\nonumber &=9|A_3||B_3|+6|A_3||B_2|+3|A_3||B_1|+6|A_2||B_3|+4|A_2||B_2|+2|A_2||B_1|+\\
\nonumber &\hspace{7mm}|A_1||B_1|+2|A_1||B_2|+3|A_1||B_3|-6|A_3||B_3|-3|A_3||B_2|-2|A_3||B_1|-\\
\nonumber &\hspace{5mm}3|A_2||B_3|-2|A_2||B_2|-|A_2||B_1|-|A_1||B_2|-2|A_1||B_3|\\
\nonumber &=3|A_3|(3|B_3|+2|B_2|+|B_1|)+2|A_2|(3|B_3|+2|B_2|+|B_1|)+|A_1|(3|B_3|+\\
\nonumber&\hspace{5mm}2|B_2|+|B_1|)-6|A_3||B_3|-3|A_3||B_2|-2|A_3||B_1|-3|A_2||B_3|-2|A_2||B_2|-\\
\nonumber&\hspace{5mm}|A_2||B_1|-|A_1||B_2|-2|A_1||B_3|\\
\nonumber &=(3|A_3|+2|A_2|+|A_1|)(3|B_3|+2|B_2|+|B_1|)-6|A_3||B_3|-3|A_3||B_2|-\\
\nonumber&\hspace{5mm}2|A_3||B_1|-3|A_2||B_3|-2|A_2||B_2|-|A_2||B_1|-|A_1||B_2|-2|A_1||B_3|\\
\nonumber&=\gamma_{rdR}(G)\gamma_{rdR}(H)-6|A_3||B_3|-3|A_3||B_2|-2|A_3||B_1|-3|A_2||B_3|-\\
\nonumber&\hspace{5mm}2|A_2||B_2|-|A_2||B_1|-|A_1||B_2|-2|A_1||B_3|.
\end{align}
\end{proof}

Now we give the following theorem by using the above lemma.

\begin{theorem}\label{str4}
For any connected graphs $G$ and $H$ of order $n\ge3$ and $m\ge3$ respectively, let $f_1=(A_0,A_1,A_2,A_3)$ be a $\gamma_{rdR}(G)$-function and let $f_2=(B_0,B_1,B_2,B_3)$ be a $\gamma_{rdR}(H)$-function. Then
\begin{align}
\nonumber\gamma_{rdR}(G\boxtimes H)&\le\gamma_{rdR}(G)\gamma_{rdR}(H)-6.
\end{align}
\end{theorem}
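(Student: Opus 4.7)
The plan is to invoke Lemma \ref{str3} and reduce the statement to a positivity inequality among the class sizes. Write
\[
S := 6|A_3||B_3|+3|A_3||B_2|+2|A_3||B_1|+3|A_2||B_3|+2|A_2||B_2|+|A_2||B_1|+|A_1||B_2|+2|A_1||B_3|
\]
for the quantity subtracted in Lemma \ref{str3}. Then Lemma \ref{str3} yields $\gamma_{rdR}(G\boxtimes H)\le \gamma_{rdR}(G)\gamma_{rdR}(H)-S$, so it is enough to prove $S\ge 6$.

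Two elementary structural facts about any RDRD-function on a connected graph of order at least three drive the argument. First, $|A_2|+|A_3|\ge 1$, since otherwise every vertex would require a neighbor of value at least $2$, which does not exist. Second, if $|A_3|=0$ and $|A_2|=1$, then $A_0=\emptyset$: a vertex in $A_0$ would need two distinct neighbors of value $2$, contradicting $|A_2|=1$; consequently $|A_1|=n-1\ge 2$. The analogous statements hold for $f_2$ on $H$.

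The argument then splits into cases on $|A_3|$ and $|B_3|$. If both are positive, then $S\ge 6|A_3||B_3|\ge 6$. If $|A_3|\ge 1$ and $|B_3|=0$, the first fact gives $|B_2|\ge 1$; when $|B_2|\ge 2$ we get $S\ge 3|A_3||B_2|\ge 6$, while when $|B_2|=1$ the second fact forces $|B_1|\ge 2$, so $S\ge 3|A_3||B_2|+2|A_3||B_1|\ge 7$. The case $|A_3|=0$, $|B_3|\ge 1$ is symmetric. Finally, if $|A_3|=|B_3|=0$, then $|A_2|,|B_2|\ge 1$ and $S$ collapses to $2|A_2||B_2|+|A_2||B_1|+|A_1||B_2|$; subdividing according to whether each of $|A_2|,|B_2|$ equals $1$ or exceeds $1$ (and using the second fact to pin down the corresponding $|A_1|,|B_1|$) yields $S\ge 8$ in every sub-case except $|A_2|=|B_2|=1$.

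The tight sub-case is $|A_3|=|B_3|=0$ with $|A_2|=|B_2|=1$; here the second fact forces $|A_1|=n-1$ and $|B_1|=m-1$, so $S=2+(m-1)+(n-1)=n+m\ge 6$, with equality exactly at $n=m=3$. This is precisely where the $-6$ in the theorem is tight, and it explains the hypothesis $n,m\ge 3$. The potential obstacle is ensuring that the two structural facts suffice to close this boundary sub-case; once they do, the remaining work is routine bookkeeping with non-negative monomials.
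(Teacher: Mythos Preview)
Your proof is correct and follows essentially the same route as the paper: invoke Lemma~\ref{str3}, reduce to showing the subtracted quantity is at least $6$, and run the identical three-way case split on $|A_3|,|B_3|$ (with the same further sub-cases on $|A_2|,|B_2|$). Your explicit articulation of the two structural facts---in particular that $|A_3|=0,\ |A_2|=1$ forces $A_0=\emptyset$ and hence $|A_1|=n-1$---is a welcome clarification of a step the paper asserts somewhat tersely, but the argument is otherwise the same.
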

\begin{proof}
Note that the inequality in Lemma \ref{str3}, we will prove the result by considering the following cases.

\textbf{Case 1.} $|A_3|\ge1$ and $|B_3|\ge1$.

Obviously, the result holds by Lemma \ref{str3}.

\textbf{Case 2.} Either $|A_3|=0$ or $|B_3|=0$.

Without loose of generality, we may assume $|A_3|=0$ and $|B_3|\ge1$. Note that there must be $|A_2|\neq0$. If $|A_2|\ge2$, then $3|A_2||B_3|\ge6$. The result holds. If $|A_2|=1$, then $|A_1|\ge2$, since the order of $G$ is at least 3. It follows $3|A_2||B_3|+2|A_1||B_3|\ge7$. The result also holds. Similarly, the desired result can be obtained when $|B_3|=0$ and $|A_3|\ge1$.

\textbf{Case 3.} $|A_3|=|B_3|=0$.

We know that there must be $|A_2|\neq0$ and $|B_2|\neq0$. If $|A_2|\ge2$ and $|B_2|\ge2$, then $2|A_2||B_2|+|A_2||B_1|+|A_1||B_2|\ge8$. The result holds. If $|A_2|\ge2$ and $|B_2|=1$, then there must be $|B_1|\ge2$, since the order of $H$ is at least 3. It follows $2|A_2||B_2|+|A_2||B_1|+|A_1||B_2|\ge8$. Similarly, the desired result can be obtained if $|B_2|\ge2$ and $|A_2|=1$. Now we consider the only remaining case, that is, $|A_2|=|B_2|=1$, then there must be $|A_1|\ge2$ and $|B_1|\ge2$. It follows $2|A_2||B_2|+|A_2||B_1|+|A_1||B_2|\ge6$. The theorem holds.
\end{proof}

The upper bound in Theorem \ref{str4} can be achieved. For instance, if $G$ and $H$ are graphs of order $n$ and $m$, where $n,m\ge3$, containing a vertex of degree $n-1$ and $m-1$ respectively, and there is no pendant edge in $G$ and $H$. In such a case, $G\boxtimes H$ contains a vertex of degree $nm-1$ and there is no pendant edge in $G\boxtimes H$, since $d_{G\boxtimes H}(u,v)=d_{G}(u)d_{H}(v)+d_{G}(u)+d_{H}(v)$. Recall the result in \cite{Xi}, thus we have $\gamma_{rdR}(G\boxtimes H)=3=\gamma_{rdR}(G)\gamma_{rdR}(H)-6=3\times3-6$.

In the remaining part of this section, the exact values of $\gamma_{rdR}(P_2\boxtimes P_m)$ and $\gamma_{rdR}(P_3\boxtimes P_m)$ are determined.

In the sequel, let $f$ be a $\gamma_{rdR}(P_n\boxtimes P_m)$-function. We denote the set of vertices on the $j$-th column $\{v_{0,j},\ldots, v_{n-1,j}\}$, as $V^j$, that is $V^j=\{v_{i,j}:i\in\{0,\ldots,n-1\}\}$, and the weight of the $j$-th column $f_j=f(V^j)=\sum\nolimits_{v_{i,j}\in {V^j}}f(v_{i,j})$. Figure \ref{fig01} shows the graph of $P_n\boxtimes P_m$.
\begin{figure}[H]
\centering
\includegraphics[scale=0.4]{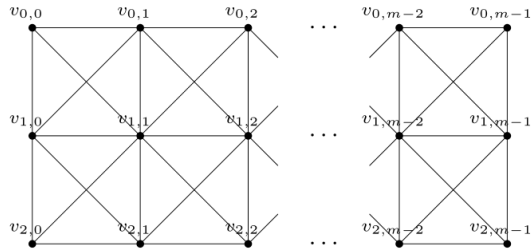}
\caption{Graph $P_n\boxtimes P_m$.}
\label{fig01}
\end{figure}
\begin{lemma}
Let $f$ be a $\gamma_{rdR}(P_n\boxtimes P_m)$-function, where $n=2~\text{or}~3$, the followings hold.
\begin{enumerate}[(1)]\label{stronglem}
  \item $f_{j-1}+f_j+f_{j+1}\ge3$ for $j\in\{1,\ldots,m-2\}$.
  \item $f_{0}+f_{1}\ge3$ and $f_{n-2}+f_{n-1}\ge3$.
\end{enumerate}
\end{lemma}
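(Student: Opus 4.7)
The plan is to prove both parts by contradiction, exploiting the locality of $P_n\boxtimes P_m$: every neighbor of a vertex of column $V^j$ lies in $V^{j-1}\cup V^j\cup V^{j+1}$, so whether a vertex of $V^j$ is restrained double Roman dominated is determined entirely by the values $f$ assigns to these (at most) three columns.

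For part $(1)$, I would assume $f_{j-1}+f_j+f_{j+1}\le 2$ for some interior $j\in\{1,\dots,m-2\}$. Within these three columns no vertex can carry value $3$, and at most one vertex can carry value $2$. The first step is to rule out any $v\in V^j$ with $f(v)=0$: such a vertex would require either a neighbor of value $3$ or two neighbors of value $2$, both precluded by the weight budget. Hence every vertex of $V^j$ has $f\ge 1$, forcing $f_j\ge n$. For $n=3$ this already contradicts $f_j\le 2$. For $n=2$ one further step is needed: $f_j\ge 2$ combined with the assumption forces $f_{j-1}=f_{j+1}=0$ and $f_j=2$, so both vertices of $V^j$ receive value exactly $1$; but then any such vertex needs a neighbor of value $\ge 2$, while all of its neighbors lie in the three columns whose entries are at most $1$, a contradiction.

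For part $(2)$, the identical case analysis applies to each boundary pair, since the vertices of $V^0$ have all their neighbors inside $V^0\cup V^1$ (and symmetrically at the other end). Assuming $f_0+f_1\le 2$, no vertex of $V^0$ can be assigned $0$, so $f_0\ge n$; this already contradicts $f_0\le 2$ when $n=3$, and for $n=2$ it forces $f_0=2$, $f_1=0$, leaving each value-$1$ vertex of $V^0$ with no neighbor of value $\ge 2$. The only delicate point throughout is the $n=2$ case, where the coarse bound $f_j\ge n$ is not by itself strong enough and one must additionally invoke the defense requirement for vertices of value $1$; notably, the \emph{restrained} clause of the definition plays no role in the argument, so the same bound is valid for $\gamma_{dR}$-functions as well.
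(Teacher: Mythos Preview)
Your argument is correct but more elaborate than necessary. The paper dispenses with the case split on $n$ via a single direct observation: if $f_j\ge3$ there is nothing to prove, while if $f_j\le2$ then some vertex $v\in V^j$ has $f(v)\in\{0,1\}$, and in either case the domination rules already force $\sum_{u\in N[v]}f(u)\ge3$ (a value-$0$ vertex needs a $3$-neighbor or two $2$-neighbors, hence $f(N(v))\ge3$; a value-$1$ vertex contributes $1$ and needs a $\ge2$-neighbor, hence $f(N[v])\ge1+2$). Since $N[v]\subseteq V^{j-1}\cup V^j\cup V^{j+1}$ for interior $j$, and $N[v]\subseteq V^0\cup V^1$ at the left boundary (symmetrically at the right), the inequality follows at once, with no need to distinguish $n=2$ from $n=3$ or to argue separately about the value-$1$ vertices in the $n=2$ case. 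Your contradiction argument reaches the same conclusion but does more bookkeeping; the paper's version also makes transparent that the hypothesis $n\in\{2,3\}$ is not actually used in the proof (it matters only for the sharpness exploited in the subsequent theorem). Your closing remark that the restrained clause is never invoked is correct and matches the paper's argument as well.
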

\begin{proof}
For $j\in\{0,\ldots,m-1\}$, if there exists a $j$ such that $0\le f_j\le2$, then there must be a vertex $v\in V^j$ assigned $0$ or $1$. By the definition of RDRD-function, $\sum_{u\in N[v]}f(u)\ge3$. Note that $N[v]\subseteq V^{j-1}\cup V^j\cup V^{j+1}$ for $j\in\{1,\ldots,m-2\}$, thus $f_{j-1}+f_j+f_{j+1}\ge3$. While if $v\in V^0$, then $N[v]\subseteq V^{0}\cup V^1$, thus $f_0+f_1\ge3$, Similarly we have $f_{n-2}+f_{n-1}\ge3$.
\end{proof}

\begin{theorem}\label{strongthm}
For $n=2~\text{or}~3$,
\begin{equation}{
\gamma_{rdR}(P_n\boxtimes P_m)=\left\{
\begin{array}{llll} \vspace{0.1cm}
m,                 &\hspace{2mm}m\equiv0(\bmod\ 3),\\
m+2,                 &\hspace{2mm}m\equiv1(\bmod\ 3),\\
m+1,                 &\hspace{2mm}m\equiv2(\bmod\ 3).
\end{array}\notag
\right.}
\end{equation}
\end{theorem}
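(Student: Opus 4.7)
My plan is to establish matching upper and lower bounds on the column-sum $W=\sum_{j=0}^{m-1} f_j$ of an optimal RDRD-function $f$. The lower bound will come from summing the local inequalities of Lemma \ref{stronglem}, while the upper bound will be obtained from an explicit construction that uses only the labels $0$ and $3$ and places each $3$ at a \emph{central} vertex of $P_n$.

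For the upper bound I would assign the value $3$ to the unique vertex of $P_n$ when $n=2$ (or the middle vertex when $n=3$) in certain columns, and the value $0$ everywhere else. Because the chosen row is at $P_n$-distance at most one from every other row, such a vertex in column $j$ strongly dominates all of columns $j-1,j,j+1$. Taking the $3$-columns to be $\{1,4,\ldots,3k-2\}$ when $m=3k$ gives weight $3k=m$; adjoining an extra $3$-column at $3k$ when $m=3k+1$ gives weight $m+2$; and taking $\{1,4,\ldots,3k+1\}$ when $m=3k+2$ gives weight $m+1$. The coverage condition is immediate (every column is within one of a $3$-column), and the restrained condition is automatic because $n\ge 2$ forces every $0$-vertex to have some $0$-neighbor: for a vertex in a $3$-column, the rows adjacent to the $3$-row in a neighboring non-$3$-column are all $0$; for a vertex in a non-$3$-column, its companion row in the same column is $0$ and adjacent.

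For the lower bound I plan to partition the column indices into disjoint blocks and apply Lemma \ref{stronglem} to each. When $m=3k$, the triple partition $\{0,1,2\},\{3,4,5\},\ldots,\{3k-3,3k-2,3k-1\}$ with the interior inequality at the middle column of each triple gives $W\ge 3k=m$. When $m=3k+1$, I would use both boundary inequalities $f_0+f_1\ge 3$ and $f_{m-2}+f_{m-1}\ge 3$ together with $k-1$ disjoint interior triples $(2,3,4),(5,6,7),\ldots,(3k-4,3k-3,3k-2)$, yielding $W\ge 3+3(k-1)+3=m+2$. When $m=3k+2$, a single boundary inequality $f_0+f_1\ge 3$ together with $k$ disjoint interior triples $(2,3,4),(5,6,7),\ldots,(3k-1,3k,3k+1)$ yields $W\ge 3+3k=m+1$. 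In every case the selected column indices are disjoint, so the inequalities sum cleanly.

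Summing the inequalities is routine; the main obstacle is the restrained verification for the construction in the case $m\equiv 1\pmod 3$, where the last two $3$-columns sit at distance two apart (namely $3k-2$ and $3k$), creating a non-$3$-column $3k-1$ sandwiched between two $3$-columns whose $0$-vertices could a priori fail the restrained condition. One has to check that in this situation the companion row in column $3k-1$ is still a $0$-neighbor, which uses $n\ge 2$. A secondary subtlety is that for $k=0$ or $k=1$ (so $m\in\{2,3,4,5\}$) several of the interior triple ranges are empty; these small instances should be verified directly so that the index ranges in the lower-bound sum are well-defined.
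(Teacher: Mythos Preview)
Your proposal is correct and follows essentially the same approach as the paper: the paper's upper bound places a single $3$ at $v_{1,j}$ in exactly the columns you describe (with the same extra column in the $m\equiv 1\pmod 3$ case), and its lower bound partitions $\{0,\dots,m-1\}$ into the same boundary pairs and interior triples and applies Lemma~\ref{stronglem} blockwise, just as you outline. The only cosmetic slip is the phrase ``the unique vertex of $P_n$ when $n=2$'' (there are two; you simply fix one of them), and the small cases you flag for direct verification are indeed handled by the same inequalities with empty interior ranges.
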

\begin{proof}
Let $G=P_n\boxtimes P_m$, where $n=2~\text{or}~3$. We obtain the upper bounds of RDRD-numbers of $G$ through giving a RDRD-function on $G$ in the following cases.

\textbf{Case 1.} $m\equiv0(\bmod\ 3)$.

Define $f:V(G)\rightarrow\{0,1,2,3\}$ by $f(v_{1,3k+1})=3$ for $k=0,\ldots,\frac{m}{3}-1$ and $f(v_{i,j})=0$ otherwise. Clearly $f$ is a RDRD-function of $G$ with weight $\frac{m}{3}\times 3=m$, thus $\gamma_{rdR}(G)\le m$.

\textbf{Case 2.} $m\equiv1(\bmod\ 3)$.

Define $f:V(G)\rightarrow\{0,1,2,3\}$ by $f(v_{1,3k+1})=3$ for $k=0,\ldots,\lfloor\frac{m}{3}\rfloor-1$, $f(v_{1,m-2})=3$ and $f(v_{i,j})=0$ otherwise. Clearly $f$ is a RDRD-function of $G$ with weight $\frac{m-4}{3}\times 3+6=m+2$, thus $\gamma_{rdR}(G)\le m+2$.

\textbf{Case 3.} $m\equiv2(\bmod\ 3)$.

Define $f:V(G)\rightarrow\{0,1,2,3\}$ by $f(v_{1,3k+1})=3$ for $k=0,\ldots,\lfloor\frac{m}{3}\rfloor-1$, $f(v_{1,m-2})=3$ and $f(v_{i,j})=0$ otherwise. Clearly $f$ is a RDRD-function of $G$ with weight $\frac{m-2}{3}\times 3+3=m+1$, thus $\gamma_{rdR}(G)\le m+1$.

Now we prove the inverse inequality. For $n\equiv 0(\bmod\ 3)$, by Lemma \ref{stronglem} (1), we can obtain that $f(V)=\sum\nolimits_{j=0}^{m-1}f_j\ge \frac{m}{3}\times3=m$, that is $\gamma_{rdR}(G)\ge m$.

For $n\equiv 1(\bmod\ 3)$, by Lemma \ref{stronglem} (2), $f_{0}+f_{1}\ge3$ and $f_{m-2}+f_{m-1}\ge3$, we have
\begin{align}
\nonumber f(V)&=\sum\nolimits_{j=0}^{m-1}f_j=(f_{0}+f_{1}+f_{m-2}+f_{m-1})+\sum\nolimits_{j\in\{2,\ldots,m-3\}}f_{j}\\
\nonumber &\geq3+3+\frac{m-4}{3}\times3\\
\nonumber &=m+2.
\end{align}
Thus $\gamma_{rdR}(G)\ge m+2$.

For $n\equiv 2(\bmod\ 3)$, by Lemma \ref{stronglem} (2), $f_{0}+f_{1}\ge3$, we have
\begin{align}
\nonumber f(V)&=\sum\nolimits_{j=0}^{m-1}f_j=(f_{0}+f_{1})+\sum\nolimits_{j\in\{2,\ldots,m-1\}}f_{j}\\
\nonumber &\geq3+\frac{m-2}{3}\times3\\
\nonumber &=m+1.
\end{align}
Thus $\gamma_{rdR}(G)\ge m+1$.

From the above, the result holds.
\end{proof}

\section{Cardinal Product}
In this section, we study the restrained double Roman domination number of $G\times H$. For any two graphs, we present an upper bound and a lower bound of $\gamma_{rdR}(G\times H)$. Also we obtain the exact values of $\gamma_{rdR}(P_2\times G)$ for many types of graph $G$. Further, we determine the exact value of $\gamma_{rdR}(C_3\times C_m)$ by using a bagging approach.

We begin with the following known proposition.
\begin{proposition}\cite{Mojdeha2021}\label{pro1}

\begin{itemize}
\item[(1)] For a path $P_n$ $(n\ge4)$, $\gamma_{rdR}(P_n)=n+2.$
\item[(2)] For a cycle $C_n$ $(n\ge3)$,
\begin{equation}{
\gamma_{rdR}(C_{n})=\left\{
\begin{array}{llll} \vspace{0.1cm}
n,                 &\hspace{2mm}n\equiv0(\bmod\ 3),\\
n+2,                 &\hspace{2mm}otherwise.
\end{array}\notag
\right.}
\end{equation}
\end{itemize}
\end{proposition}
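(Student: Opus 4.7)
The plan is to prove both parts by establishing matching upper and lower bounds. For the upper bounds I would exhibit explicit RDRD-functions of the claimed weight; for the lower bounds I would derive a uniform local constraint of the form ``any three consecutive vertices together carry weight at least $3$'' and sum it along a well-chosen partition of $V$, paying close attention to the boundary (endpoints of a path, or the residue of $n$ modulo $3$ in a cycle). The logical dependency is similar to what the authors already do for $P_n\boxtimes P_m$ in Lemma \ref{stronglem} and Theorem \ref{strongthm}, so this cycle/path calculation should be viewed as the one-dimensional prototype of that method.

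For the cycle case, label the vertices cyclically $v_0,v_1,\ldots,v_{n-1}$. When $n\equiv 0\pmod 3$, set $f(v_{3i+1})=3$ for $i=0,\ldots,\tfrac{n}{3}-1$ and $f=0$ elsewhere; each $0$-vertex has a neighbour assigned $3$ (so it is double Roman dominated) and an adjacent $0$-vertex (so the restrained condition holds), yielding weight $n$. When $n\equiv 1$ or $2\pmod 3$ I would modify this periodic pattern near a single position, for instance by locally replacing a segment $3,0,0$ by $3,0,0,3$ or $3,0,0,2,2$ depending on the residue; this increases the weight by exactly $2$ while preserving both the double Roman and the restrained conditions, giving $\gamma_{rdR}(C_n)\le n+2$.

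For the path case I would first observe that neither endpoint can be assigned $0$: if $f(v_1)=0$, then the unique neighbour $v_2$ would have to satisfy $f(v_2)=3$ (there is no second neighbour that could provide the alternative ``two $2$'s''), but then $v_1$ is isolated in $G[V_0]$, violating the restrained condition. Hence $f(v_1),f(v_n)\ge 1$, which already accounts for the extra $+2$ relative to the cyclic bound. A template such as $(1,3,0,0,3,0,0,\ldots,3,1)$, adjusted by one or two positions near the right end according to $n\bmod 3$, then achieves weight $n+2$ in all three congruence classes.

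For the lower bounds I would prove the local inequality that for any interior vertex $v_j$ one has $f(v_{j-1})+f(v_j)+f(v_{j+1})\ge 3$; this follows from the definition by splitting on $f(v_j)\in\{0,1\}$ (apply the double Roman condition at $v_j$) versus $f(v_j)\ge 2$ (trivial). Partitioning $V(C_n)$ into consecutive triples and summing yields $w(f)\ge n$ when $3\mid n$. For the remaining congruence classes of $C_n$, or for $P_n$, the boundary effect (the forced $f(v_1),f(v_n)\ge 1$ for paths, or the wrap-around defect in cycles) contributes the extra $2$; this is the step I expect to be most delicate. My plan for handling it is an extremal argument: among all $\gamma_{rdR}$-functions choose one maximising $|V_3|$, then analyse the possible patterns around a run of at most two consecutive zeros and show that any attempt to remove weight either isolates a $0$-vertex in $G[V_0]$ or leaves some $0$- or $1$-vertex undominated, so that the minimum strictly exceeds $n$ and must in fact reach $n+2$.
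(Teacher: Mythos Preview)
The paper does not supply a proof of Proposition~\ref{pro1}; it is quoted without argument from \cite{Mojdeha2021}, so there is no ``paper's own proof'' against which to compare your attempt.

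On the substance of your sketch: the overall strategy (explicit constructions for the upper bounds, the local inequality $f(v_{j-1})+f(v_j)+f(v_{j+1})\ge 3$ summed along a partition for the lower bounds, plus a separate boundary/extremal analysis for the residual $+2$) is the standard one and is sound in outline. There is, however, one concrete slip. Your suggested tail ``$3,0,0,2,2$'' for $C_n$ with $n\equiv 2\pmod 3$ is \emph{not} an RDRD-function: the last $0$ in that block has neighbours assigned $0$ and $2$, so with only a single $2$-neighbour and no $3$-neighbour it fails the double Roman condition. A valid alternative is to place $3$'s at positions $0,3,\ldots,3k$ and a $1$ at position $3k{+}1$ (pattern $3,0,0,\ldots,3,0,0,3,1$), giving weight $3(k{+}1)+1=n+2$.

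You are also right to flag the lower bound for $C_n$ with $3\nmid n$ and for $P_n$ as the delicate step, but your plan currently stops short of delivering it. Summing the local inequality around the cycle yields only $w(f)\ge n$. For paths, your endpoint observation is correct and gives $f(v_1)+f(v_2)\ge 3$ and $f(v_{n-1})+f(v_n)\ge 3$; however, the block decomposition $\{v_1,v_2\}\cup\{v_3,v_4,v_5\}\cup\cdots\cup\{v_{n-1},v_n\}$ produces the target $n+2$ only when $n\equiv 1\pmod 3$, while for $n\equiv 0$ or $2\pmod 3$ the natural decompositions fall short by $1$ or $2$. So the extremal/structural argument you defer to is genuinely required and would need to be written out in full before the proof is complete.
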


\begin{observation}
Let $G$ and $H$ be the graphs of order $n$ and $m$, respectively. Then
$$\gamma_{rdR}(G\times H)\ge\lceil\frac{3nm}{\Delta(G)\Delta(H)+1}\rceil,$$
In particular, if $G\times H$ is connected, then $$\gamma_{rdR}(G\times H)\le 2nm-2.$$
\end{observation}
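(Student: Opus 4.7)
The two inequalities are essentially independent, and both reduce to general facts about $\gamma_{rdR}$ applied to $\Gamma:=G\times H$. The key observation is that the cardinal product satisfies $|V(\Gamma)|=nm$ and $\Delta(\Gamma)=\Delta(G)\Delta(H)$. The upper bound is then immediate from Theorem \ref{str2}: provided $nm\ge 3$ and $\Gamma$ is connected, applying that theorem to the connected graph $\Gamma$ of order $nm$ gives $\gamma_{rdR}(\Gamma)\le 2nm-2$.

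For the lower bound I would run a standard discharging (double-counting) argument. Let $f=(V_0,V_1,V_2,V_3)$ be a $\gamma_{rdR}(\Gamma)$-function. The main step is the pointwise inequality
\[
f(N[v])\ =\ \sum_{u\in N[v]}f(u)\ \ge\ 3\qquad\text{for every }v\in V(\Gamma).
\]
The cases $f(v)\in\{0,1,3\}$ are forced directly by the DRD-conditions: a $0$-vertex needs either a $3$-neighbor or two $2$-neighbors, a $1$-vertex needs a neighbor of value $\ge 2$, and $f(v)=3$ is trivial. Once the pointwise inequality holds, summing over $v$ and swapping the order of summation gives
\[
3nm\ \le\ \sum_{v}f(N[v])\ =\ \sum_{u}f(u)\bigl(d(u)+1\bigr)\ \le\ \bigl(\Delta(\Gamma)+1\bigr)\,w(f),
\]
and dividing by $\Delta(G)\Delta(H)+1$ and taking the ceiling yields the claimed lower bound.

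The delicate case is $f(v)=2$, where the DRD-condition imposes no requirement on $N(v)$ and so a priori every neighbor of $v$ could lie in $V_0$. This is the step I expect to require the most care. I would dispose of it by invoking the minimality of $f$: if such a $v$ existed, each $0$-neighbor $u$ of $v$ would still need (by the DRD-condition) a second defender $w\ne v$ of weight $\ge 2$, and the restrained condition would impose additional $0$-structure in $N(v)$; combining these, one can redistribute weight locally to produce a valid RDRD-function of strictly smaller weight, contradicting the choice of $f$. Apart from verifying this case, the remainder of the proof is routine double-counting.
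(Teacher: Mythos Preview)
Your treatment of the upper bound is correct and matches the paper: both of you simply apply Theorem~\ref{str2} to $\Gamma=G\times H$, using $|V(\Gamma)|=nm$.

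For the lower bound, however, your proposed route has a genuine gap. The pointwise inequality $f(N[v])\ge 3$ need \emph{not} hold for a $\gamma_{rdR}$-function when $f(v)=2$, and your minimality/redistribution sketch cannot be completed. A concrete counterexample: take two copies of the diamond $K_4-e$ and identify one degree-$2$ vertex from each, obtaining a $7$-vertex graph with degree-$2$ vertices $v,w'$ and the shared degree-$4$ vertex $w$. Assigning $f(v)=f(w)=f(w')=2$ and $0$ elsewhere gives a valid RDRD-function of weight $6$; a short case check on the two ``ends'' shows $\gamma_{dR}=6$, so this $f$ is in fact a $\gamma_{rdR}$-function. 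Yet $f(N[v])=2<3$. Thus no local redistribution can produce a strictly lighter RDRD-function, because none exists. (The summed inequality $\sum_v f(N[v])\ge 3n$ happens to survive in this example, $22\ge 21$, but not via your pointwise step.)

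The paper sidesteps this entirely: it quotes Volkmann's bound $\gamma_{dR}(\Gamma)\ge\bigl\lceil 3|V(\Gamma)|/(\Delta(\Gamma)+1)\bigr\rceil$, combines it with the trivial inequality $\gamma_{rdR}(\Gamma)\ge\gamma_{dR}(\Gamma)$, and then substitutes $\Delta(G\times H)=\Delta(G)\Delta(H)$. Volkmann's proof of the $\gamma_{dR}$ bound does not rely on the pointwise estimate; it proceeds by a counting argument on a $\gamma_{dR}$-function with $V_1=\emptyset$, separately bounding the number of $0$-vertices with a $3$-neighbour and those with two $2$-neighbours. If you want a self-contained argument, you should reproduce that counting rather than attempt the pointwise claim.
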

\begin{proof}
In \cite{Volkmann2018}, Lutz Volkman proved that for any graph $G$ of order $n$ and $\Delta(G)\ge1$, $\gamma_{dR}(G)\ge\lceil\frac{3n}{\Delta(G)+1}\rceil.$ Since $\Delta(G\times H)=\Delta(G)\Delta(H)$ and $\gamma_{rdR}(G)\ge\gamma_{dR}(G)$, we get $$\gamma_{rdR}(G\times H)\ge\lceil\frac{3nm}{\Delta(G)\Delta(H)+1}\rceil.$$
Recall the upper bound of $\gamma_{rdR}(G)$ for any connected graph of order at least $3$ in Theorem \ref{str2}, $\gamma_{rdR}(G)\le2|V(G)|-2$. Further $V(G\times H)=|V(G)||V(H)|=nm$ for any two graphs. Thus we get $$\gamma_{rdR}(G\times H)\le2nm-2.$$
\end{proof}

\begin{theorem}
For any tree $T$ and any graph $G$ without odd cycle, we have
$$\gamma_{rdR}(P_2\times T)=2\gamma_{rdR}(T)<\gamma_{rdR}(P_2)\cdot \gamma_{rdR}(T),$$
$$\gamma_{rdR}(P_2\times G)=2\gamma_{rdR}(G)<\gamma_{rdR}(P_2)\cdot \gamma_{rdR}(G).$$
\end{theorem}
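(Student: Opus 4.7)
The plan is to reduce both statements to a single structural observation: when $H$ is bipartite (in particular, when $H$ is a tree, since trees contain no odd cycle), the cardinal product $P_2\times H$ decomposes as a vertex-disjoint union of two copies of $H$. Fix a bipartition $V(H)=A\cup B$ and write $V(P_2)=\{0,1\}$. By the definition of the cardinal product, two vertices $(i,u)$ and $(j,v)$ of $P_2\times H$ are adjacent only when $i\ne j$ and $uv\in E(H)$; since every edge of $H$ has one endpoint in $A$ and the other in $B$, every edge of $P_2\times H$ lies either between $\{0\}\times A$ and $\{1\}\times B$, or between $\{0\}\times B$ and $\{1\}\times A$. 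The induced subgraphs $G_1$ and $G_2$ on these two vertex sets are each isomorphic to $H$ via the projection $(i,w)\mapsto w$, so $P_2\times H\cong H\sqcup H$.

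From here the main equality follows quickly. An RDRD-function on a graph restricts to an RDRD-function on each connected component, and vice versa, because the three defining conditions --- the condition at $V_0$, the condition at $V_1$, and the restrained condition that $V_0$ induces a subgraph without isolated vertex --- are all local to the component of a vertex. Therefore $\gamma_{rdR}$ is additive across vertex-disjoint unions, and combined with the decomposition above this gives
$$\gamma_{rdR}(P_2\times H)\;=\;\gamma_{rdR}(G_1)+\gamma_{rdR}(G_2)\;=\;2\,\gamma_{rdR}(H).$$
This yields the stated equality in both parts of the theorem, the tree case being a specialization of the bipartite case.

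For the strict inequality it suffices to note that $\gamma_{rdR}(P_2)=3$: the function $f(u)=1$, $f(v)=2$ on $P_2=uv$ is a valid RDRD-function of this weight, and one verifies by inspection that no lighter RDRD-function exists (any function of weight at most $2$ forces some vertex to be labelled $0$ with no neighbour of value $\ge 2$, violating either the double Roman condition or the restrained condition). Since $\gamma_{rdR}(H)>0$ for every non-empty graph $H$, we conclude
$$2\,\gamma_{rdR}(H)\;<\;3\,\gamma_{rdR}(H)\;=\;\gamma_{rdR}(P_2)\cdot\gamma_{rdR}(H),$$
which finishes the proof. The argument is essentially structural; the only substantive step is the bipartite decomposition of $P_2\times H$, and I do not expect any real obstacle beyond setting that up carefully.
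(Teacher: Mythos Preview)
Your proof is correct and follows essentially the same approach as the paper: both arguments rest on the observation that for bipartite $H$ the product $P_2\times H$ decomposes as two disjoint copies of $H$, together with $\gamma_{rdR}(P_2)=3$. You simply supply more detail (the explicit bipartite decomposition, the additivity of $\gamma_{rdR}$ over disjoint unions, and the verification that $\gamma_{rdR}(P_2)=3$) than the paper's terse proof.
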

\begin{proof}
For cardinal product $G\times H$, if $G$ and $H$ are both bipartite, then $G\times H$ consists of two connected components. Further $P_2\times T$ and $P_2\times G$ consist of two disjoint copies of $T$ and $G$ respectively. Note that $\gamma_{rdR}(P_2)=3$, thus the theorem holds.
\end{proof}
By Proposition \ref{pro1}, we have Corollary \ref{coro1}.
\begin{corollary}\label{coro1}
\begin{equation}{
\gamma_{rdR}(P_2\times P_n)=\left\{
\begin{array}{llll} \vspace{0.1cm}
2n+4,                 & n\ge4,\\
2n+2,                 &n\le3.
\end{array}\notag
\right.}
\end{equation}
\end{corollary}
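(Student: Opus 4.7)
The plan is to specialize the preceding theorem to the tree $T=P_n$. Since every path is a tree, that theorem yields
\[
\gamma_{rdR}(P_2\times P_n)=2\gamma_{rdR}(P_n)
\]
for all $n\geq 1$, so the corollary reduces to computing $\gamma_{rdR}(P_n)$ in each range. For $n\geq 4$, Proposition \ref{pro1}(1) immediately gives $\gamma_{rdR}(P_n)=n+2$, and doubling yields the first branch $2n+4$.

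For the remaining cases $n\in\{1,2,3\}$ I would verify by direct inspection that $\gamma_{rdR}(P_n)=n+1$, so that doubling produces the second branch $2n+2$. The upper bound is witnessed by explicit RDRD-functions of weight $n+1$: label the single vertex of $P_1$ with $2$; use the assignment $(1,2)$ on $P_2$; and use $(1,2,1)$ on $P_3$. In each case the double Roman conditions hold because every $1$-vertex has a $\geq 2$-neighbour, and the restrained clause is vacuous since no vertex is labelled $0$.

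For the matching lower bounds, a short case analysis suffices. For $P_1$ the single vertex has no neighbours, so labels $0$ and $1$ are forbidden and its weight must be at least $2$. For $P_2$, any function of weight at most $2$ either uses a $0$-vertex---which then sits isolated in the subgraph of zeros, violating the restrained condition---or a $1$-vertex without a $\geq 2$-neighbour. For $P_3$, the general double Roman lower bound rules out weights at most $2$, and the weight-$3$ candidates split into the shapes $(3,0,0)$ (the end zero has no $3$-neighbour), $(0,3,0)$ (both zeros isolated among zeros), $(1,1,1)$ (no $1$-vertex has a $\geq 2$-neighbour), and the $(2,1,0)$ or $(2,0,1)$ types (the zero-vertex has neither two $2$-neighbours nor a $3$-neighbour); each is excluded. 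The only---very mild---obstacle is precisely this bookkeeping: the cheap double Roman assignments $(0,3)$ on $P_2$ and $(0,3,0)$ on $P_3$ would give the Roman-type answer $n$, but the restrained clause forces one extra unit of weight, yielding $n+1$. Once this is pinned down, the corollary follows at once from the theorem.
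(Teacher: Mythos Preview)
Your proposal is correct and follows essentially the same approach as the paper: reduce to $2\gamma_{rdR}(P_n)$ via the preceding theorem, then invoke Proposition~\ref{pro1} for $n\ge4$. The paper's own proof is merely the one-line remark ``By Proposition~\ref{pro1}, we have Corollary~\ref{coro1}'', so your explicit verification of the small cases $n\le3$ (where Proposition~\ref{pro1} does not apply) is in fact more complete than the original.
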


\begin{theorem}
For cardinal product of $P_2$ and $C_{2n+1}$, where $n\ge1$,
\begin{equation}{
\gamma_{rdR}(P_2\times C_{2n+1})=\left\{
\begin{array}{llll} \vspace{0.1cm}
4n+2,                 &n\equiv0(\bmod\ 6),\\
4n+4,                 &otherwise.
\end{array}\notag
\right.}
\end{equation}
\end{theorem}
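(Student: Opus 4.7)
The plan is to exhibit an explicit isomorphism $P_2 \times C_{2n+1} \cong C_{4n+2}$, thereby reducing the computation of $\gamma_{rdR}(P_2 \times C_{2n+1})$ to the cycle case already handled by Proposition~\ref{pro1}(2).

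First I would verify the structural claim. Writing $V(P_2)=\{0,1\}$ and $V(C_{2n+1})=\{0,1,\dots,2n\}$, every vertex of the product has degree $d_{P_2}(\cdot)\cdot d_{C_{2n+1}}(\cdot)=1\cdot 2=2$, so the product is $2$-regular on $2(2n+1)=4n+2$ vertices. Since $P_2$ is bipartite while $C_{2n+1}$ is not, the standard fact that $K_2\times G$ is connected if and only if $G$ is connected and non-bipartite shows that $P_2\times C_{2n+1}$ is connected. A connected $2$-regular graph is a cycle, so $P_2 \times C_{2n+1} \cong C_{4n+2}$. To make this concrete I would trace the cyclic ordering $(0,0),(1,1),(0,2),(1,3),\dots$, which closes up after exactly $4n+2$ steps precisely because $2n+1$ is odd (two passes around the second coordinate are needed before the first coordinate returns to $0$).

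Second, I would invoke Proposition~\ref{pro1}(2): $\gamma_{rdR}(C_\ell)=\ell$ when $3\mid\ell$, and $\gamma_{rdR}(C_\ell)=\ell+2$ otherwise. Applying this with $\ell=4n+2$ and unpacking the residue $4n+2\pmod 3$ yields the stated dichotomy between the values $4n+2$ and $4n+4$; the translation from a condition on $\ell$ to the corresponding condition on $n$ is purely mechanical.

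The main obstacle is not any intricate domination argument---everything reduces to the cycle formula---but rather pinning down the isomorphism cleanly via the ``two passes around the odd cycle'' trick, and then correctly translating the cycle-length residue $(4n+2)\bmod 3$ into the exact case distinction on $n$ appearing in the statement. I would carry out this final modular bookkeeping carefully, cross-checking small cases (for example $n=1$, which gives $P_2\times C_3\cong C_6$ with $\gamma_{rdR}=6$) to ensure the two cases are lined up as written.
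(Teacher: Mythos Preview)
Your approach is identical to the paper's: both argue that $P_2\times C_{2n+1}\cong C_{4n+2}$ (connected, $2$-regular) and then invoke Proposition~\ref{pro1}(2). The paper simply asserts the isomorphism; your justification via degree count and the connectivity criterion for $K_2\times G$ is a welcome elaboration.

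There is, however, a genuine issue in the final step, and your own sanity check exposes it. By Proposition~\ref{pro1}(2) one has $\gamma_{rdR}(C_{4n+2})=4n+2$ precisely when $3\mid(4n+2)$, i.e.\ when $n\equiv 1\pmod 3$, \emph{not} when $n\equiv 0\pmod 6$. Your test case $n=1$ already contradicts the theorem as stated: $P_2\times C_3\cong C_6$ has $\gamma_{rdR}=6=4n+2$, yet $1\not\equiv 0\pmod 6$, so the stated dichotomy would predict $4n+4=8$. Conversely, for $n=6$ one gets $C_{26}$ with $26\not\equiv 0\pmod 3$, hence $\gamma_{rdR}=28=4n+4$, while the statement predicts $4n+2=26$. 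The paper's proof contains the same slip. So when you ``carry out this final modular bookkeeping carefully,'' you will not recover the condition in the statement; the structural reduction is correct, but the case split on $n$ should read $n\equiv 1\pmod 3$ rather than $n\equiv 0\pmod 6$.
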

\begin{proof}
For cardinal product $G\times H$, if $G$ and $H$ are not both bipartite, then $G\times H$ is connected. Further $P_2\times C_{2n+1}$ is connected and is a cycle $C_{4n+2}$. By Proposition \ref{pro1}, we can obtain that $\gamma_{rdR}(P_2\times C_{2n+1})=4n+2$ for $n\equiv0(\bmod\ 6)$, $\gamma_{rdR}(P_2\times C_{2n+1})=4n+4$ otherwise.
\end{proof}
In the remaining part of this section, we determine the exact value of $\gamma_{rdR}(C_3\times C_m)$ by using a bagging approach. Figure \ref{fig11}(a) shows the graph of $C_3\times C_m$. For more convenience and intuitiveness, we add virtual auxiliary rows and columns as shown in Figure \ref{fig11}(b).
\begin{figure}[H]
\centering
\includegraphics[scale=0.47]{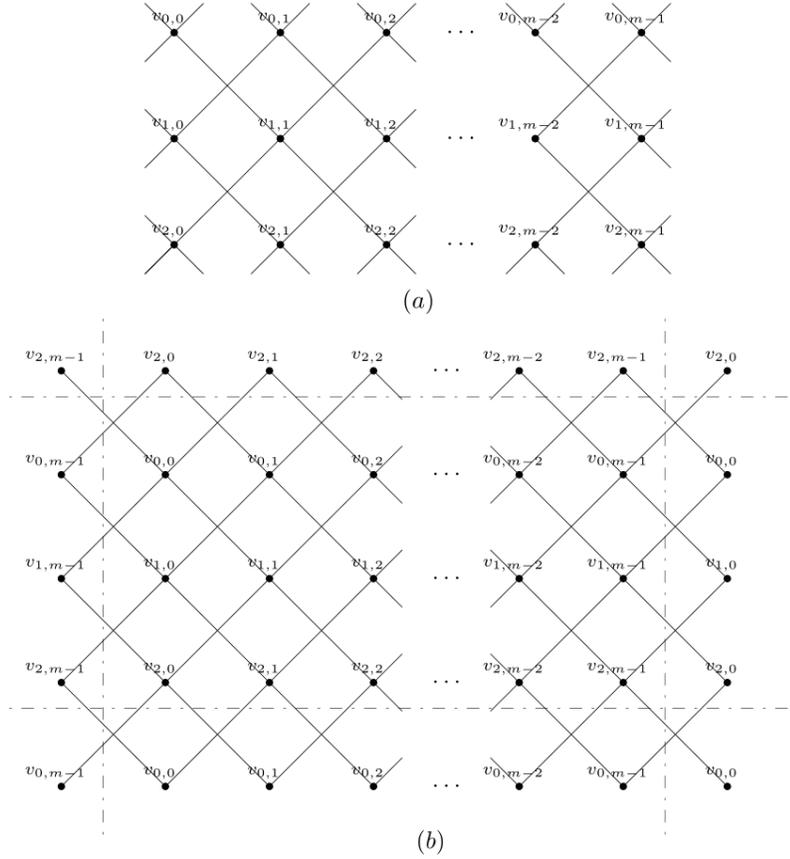}
\caption{Graph $C_3\times C_m$.}
\label{fig11}
\end{figure}
In the sequel, let $f$ be an arbitrary restrained double Roman domination function of $C_3\times C_m$. We denote the set of vertices on the $j$-th column, $\{v_{0,j}, v_{1,j}, v_{2,j}\}$, as $V^j$, that is $V^j=\{v_{i,j}:i\in\{0,1,2\}\}$, and the weight of the $j$-th column $f_j=f(V^j)=\sum\nolimits_{v_{i,j}\in {V^j}}f(v_{i,j})$.

\begin{lemma}\label{lembag}
For the graph $C_3\times C_m$, the following results hold.
\begin{enumerate}[(L1)]
  \item If $f_j=0$, then $f_{j-1}+f_{j+1}\ge6$.
  \item If $f_j=1$, then $f_{j-1}+f_{j+1}\ge5$.
  \item If $f_j=3$, then $f_{j-1}+f_{j+1}\ge3$.
  \item If $4\le f_j\le5$, then $f_{j-1}\ge2$ or $f_{j+1}\ge2$ holds.
\end{enumerate}
where $0\le j \le m-1$ and subscripts are taken modulo $m$.
\end{lemma}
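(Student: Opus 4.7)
The plan is to handle (L1)--(L4) in parallel by contradiction, using a single structural observation about $C_3\times C_m$: since $C_3$ is a triangle, every vertex $v_{i,j}$ has exactly four neighbors, namely $\{v_{r,j-1},v_{r,j+1}: r\in\{0,1,2\}\setminus\{i\}\}$. In particular $N(v_{i,j})\subseteq V^{j-1}\cup V^{j+1}$, and each vertex of $V^{j-1}\cup V^{j+1}$ at row $r$ is a neighbor of precisely the two vertices of $V^j$ whose row is not $r$. Writing $a_i=f(v_{i,j-1})$ and $b_i=f(v_{i,j+1})$, every RDRD-requirement coming from $V^j$ becomes a weight inequality among the six numbers $a_0,a_1,a_2,b_0,b_1,b_2$.

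For (L1), every vertex of $V^j$ has $f=0$ and so must be double Roman dominated. I would split into the cases where some $a_r$ or $b_r$ equals $3$ and where none does. In the first case, a value-$3$ vertex at row $r$ covers only the two $v_{i,j}$ with $i\ne r$, so $v_{r,j}$ still needs its own double Roman dominator among its four neighbors (which are disjoint from the value-$3$ vertex), forcing additional weight at least $3$ there and thus $f_{j-1}+f_{j+1}\ge 6$. In the second case every value is at most $2$, each $v_{i,j}$ needs two value-$2$ neighbors, and a double-counting argument---each value-$2$ vertex covers exactly two of the three $v_{i,j}$ while each $v_{i,j}$ needs to be covered twice---forces at least three value-$2$ vertices and again $f_{j-1}+f_{j+1}\ge 6$.

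For (L2), one vertex of $V^j$ has $f=1$ and the other two have $f=0$; assuming $f_{j-1}+f_{j+1}\le 4$, the same split applies. In the value-$3$ subcase, a short check (separating whether the value-$3$ neighbor sits in the same row as the unique $f=1$ vertex or not) shows that the remaining weight of at most $1$ cannot discharge the residual RDRD requirement left on some vertex of $V^j$. Without a value $3$, the two $f=0$ vertices each require two value-$2$ neighbors and the $f=1$ vertex requires at least one, and the same double-counting forces at least three value-$2$ vertices, contradicting $f_{j-1}+f_{j+1}\le 4$.

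For (L3) and (L4), I would exploit the observation that if $f_{j-1}+f_{j+1}\le 2$ then no vertex of $V^j$ can carry $f=0$, since a four-vertex neighborhood of total weight at most $2$ contains neither a value-$3$ vertex nor two value-$2$ vertices. In (L3), $f_j=3$ with every value positive forces the pattern $(1,1,1)$ on $V^j$, but then all three $f=1$ vertices demand a neighbor of value $\ge 2$, and since a single such neighbor can serve only two of the three, two distinct value-$\ge 2$ neighbors are needed, contradicting $f_{j-1}+f_{j+1}\le 2$. In (L4), the hypothesis that both $f_{j-1}\le 1$ and $f_{j+1}\le 1$ forces every $a_i,b_i\le 1$, so no vertex of $V^j$ can carry $f=1$ either; hence every vertex of $V^j$ has $f\ge 2$, giving $f_j\ge 6$ and contradicting $f_j\le 5$. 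The main obstacle in the whole proof is the asymmetric subcase analysis inside (L2), where a fresh RDRD deficit must be located on a different vertex of $V^j$ depending on the row of the value-$3$ neighbor; everything else reduces to routine weight counting once the neighborhood structure of $C_3\times C_m$ is exploited.
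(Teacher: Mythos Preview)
Your argument is correct, and in fact it is cleaner than the paper's. The paper proves (L1) and (L2) by first invoking the \emph{restrained} condition---since some $v_{i,j}$ has $f=0$, one of its neighbours in $V^{j-1}\cup V^{j+1}$ must also carry $f=0$---and then performs a fairly long vertex-by-vertex case analysis based on which neighbour that is. You bypass the restrained condition entirely: your case split ``some neighbour has value $3$'' versus ``all values are at most $2$'' together with the handshake-style double count (each value-$2$ vertex in columns $j\pm 1$ is adjacent to exactly two of the three vertices in $V^j$, so $2k$ must meet the total demand $6$, respectively $5$) gives the bounds directly from the double Roman domination requirement alone. For (L3) and (L4) the two arguments are essentially the same contrapositive, but again you state it more crisply. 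What you gain is a proof that actually shows the stronger fact that (L1)--(L4) hold for \emph{any} DRD-function on $C_3\times C_m$, not just restrained ones; what the paper's approach buys is only that the restrained hypothesis pins down a specific $0$-vertex to anchor the case analysis, which is not needed. One small point worth making explicit when you write it up: in the value-$3$ subcase of (L1), note that under the assumption of a single value-$3$ vertex the four neighbours of $v_{r,j}$ are disjoint from it, so the two weight-$3$ contributions genuinely add; and in (L2) the assumption $f_{j-1}+f_{j+1}\le 4$ rules out two value-$3$ vertices, so your ``remaining weight at most $1$'' is justified.
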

\begin{proof}
\begin{enumerate}[(L1)]
  \item $f_{j}=0$ holds for any $0\le j\le m-1$. Without lose of generality, we may assume $f(v_{0,j-1})=0$, since $N(v_{1,j})\cap V_0\neq\emptyset$. By the definition of RDRD-function, we know that $f(v_{2,j-1})+f(v_{2,j+1})+f(v_{1,j-1})+f(v_{1,j+1})\ge3$, $f(v_{0,j+1})+f(v_{2,j-1})+f(v_{2,j+1})\ge3$ and $f(v_{1,j-1})+f(v_{1,j+1})+f(v_{0,j+1})\ge3$. Now we prove the result by considering the following cases.

\textbf{Case 1.1.} $f(v_{2,j-1})+f(v_{2,j+1})\ge3$.

If $f(v_{2,j-1})+f(v_{2,j+1})\ge3$, then $f_{j-1}+f_{j+1}\geq f(v_{1,j-1})+f(v_{1,j+1})+f(v_{0,j+1})+f(v_{2,j-1})+f(v_{2,j+1})\ge3+3=6$. The result holds.

\textbf{Case 1.2.} $f(v_{2,j-1})+f(v_{2,j+1})=2$.

If $f(v_{2,j-1})+f(v_{2,j+1})=2$, then consider $v_{0,j}$ and $v_{1,j}$ should be dominated, there are $f(v_{1,j-1})+f(v_{1,j+1})\ge2$ and $f(v_{0,j+1})\ge2$ respectively. It follows $f(v_{0,j+1})+f(v_{1,j-1})+f(v_{1,j+1})\ge2+2=4$. Thus $f(v_{0,j+1})+f(v_{1,j-1})+f(v_{1,j+1})+f(v_{2,j-1})+f(v_{2,j+1})\ge2+4=6.$ The result holds.

\textbf{Case 1.3.} $f(v_{2,j-1})+f(v_{2,j+1})\le1$.

If $f(v_{2,j-1})+f(v_{2,j+1})\le1$, consider $v_{0,j}$ and $v_{1,j}$ should be dominated, there are $f(v_{1,j-1})+f(v_{1,j+1})\ge3$ and $f(v_{0,j+1})=3$ respectively. It follows $f(v_{0,j+1})+f(v_{1,j-1})+f(v_{1,j+1})\ge3+3=6$. Thus the result also holds.

  \item $f_{j}=1$. If there exists a $j$ such that $f_j=1$, assume $f(v_{0,j})=1$. Note that $N(v_{1,j})\cup V_0\neq\emptyset$, without lose of generality, we may assume $f(v_{0,j-1})=0$ or $f(v_{2,j-1})=0$.

\textbf{Case 2.1.} $f(v_{0,j-1})=0$. By the definition of RDRD function, $f(v_{2,j-1})+f(v_{2,j+1})+f(v_{1,j-1})+f(v_{1,j+1})\ge2$, $f(v_{0,j+1})+f(v_{2,j-1})+f(v_{2,j+1})\ge3$ and $f(v_{1,j-1})+f(v_{1,j+1})+f(v_{0,j+1})\ge3$. Now we consider the following subcases for $f(v_{2,j-1})+f(v_{2,j+1})$.

\textbf{Case 2.1.1.} $f(v_{2,j-1})+f(v_{2,j+1})\ge2$. If $f(v_{2,j-1})+f(v_{2,j+1})\ge2$, then $f(v_{1,j-1})+f(v_{1,j+1})+f(v_{0,j+1})+f(v_{2,j-1})+f(v_{2,j+1})\ge3+2=5$. Thus the result holds.

\textbf{Case 2.1.2.} $f(v_{2,j-1})+f(v_{2,j+1})\le1$. If $f(v_{2,j-1})+f(v_{2,j+1})\le1$, then consider $v_{0,j}$ and $v_{1,j}$ should be dominated, there are $f(v_{1,j-1})+f(v_{1,j+1})\ge2$ and $f(v_{0,j+1})=3$ respectively. It follows $f(v_{0,j+1})+f(v_{1,j-1})+f(v_{1,j+1})\ge3+2=5$. Thus the result also holds.

\textbf{Case 2.2.} $f(v_{2,j-1})=0$. By the definition of RDRD function, $f(v_{2,j+1})+f(v_{1,j-1})+f(v_{1,j+1})\ge2$, $f(v_{0,j-1})+f(v_{0,j+1})+f(v_{2,j+1})\ge3$ and $f(v_{1,j-1})+f(v_{1,j+1})+f(v_{0,j-1})+f(v_{0,j+1})\ge3$. Now we consider the following subcases for $f(v_{1,j-1})+f(v_{1,j+1})$.

\textbf{Case 2.2.1.} $f(v_{1,j-1})+f(v_{1,j+1})\ge2$. If $f(v_{1,j-1})+f(v_{1,j+1})\ge2$, then $f(v_{1,j-1})+f(v_{1,j+1})+f(v_{0,j-1})+f(v_{0,j+1})+f(v_{2,j+1})\ge2+3=5$. Thus the result holds.

\textbf{Case 2.2.2.}  $f(v_{1,j-1})+f(v_{1,j+1})\le1$. If $f(v_{1,j-1})+f(v_{1,j+1})\le1$, then consider $v_{0,j}$ and $v_{2,j}$ should be dominated, there are $f(v_{2,j+1})=3$ and $f(v_{0,j-1})+f(v_{0,j+1})\ge3$ respectively. It follows $f(v_{2,j+1})+f(v_{0,j-1})+f(v_{0,j+1})\ge3+3=6$. Thus the result also holds.

  \item $f_j=3$. If there exists a $j$ such that $f_j=3$, then there must be at least a vertex $v\in V^j$ assigned $0$ or $1$. If there exists a vertex assigned $0$, then $f_{j-1}+f_{j+1}\ge3$. If there is no vertex assigned $1$ in $V^j$, then by the definition of RDRD function, $f(v_{2,j-1})+f(v_{2,j+1})+f(v_{1,j-1})+f(v_{1,j+1})\ge2$, $f(v_{0,j-1})+f(v_{0,j+1})+f(v_{2,j-1})+f(v_{2,j+1})\ge2$ and $f(v_{1,j-1})+f(v_{1,j+1})+f(v_{0,j-1})+f(v_{0,j+1})\ge2$. Sum the above inequalities, $2(f_{j-1}+f_{j+1})\ge6$, $f_{j-1}+f_{j+1}\ge3$. The result holds.

  \item $4\le f_j\le5$. If there exists a $j$ such that $4\le f_j\le5$, then there must be at least a vertex $v\in V^j$ assigned $0$ or $1$. By the definition of RDRD function, there are at least $f_{j-1}\ge2$ or $f_{j+1}\ge2$. The result holds.
\end{enumerate}
\end{proof}

\begin{theorem}
For $m\ge3$, $\gamma_{rdR}(C_3\times C_m)=2m.$
\end{theorem}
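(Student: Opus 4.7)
The proof splits cleanly into an upper-bound construction and a matching lower bound obtained by column-wise accounting via Lemma \ref{lembag}.

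For the upper bound $\gamma_{rdR}(C_3 \times C_m) \le 2m$, I would exhibit the explicit RDRD-function $f$ given by $f(v_{0,j}) = 2$ for every $j \in \{0, 1, \ldots, m-1\}$ and $f(v_{i,j}) = 0$ for $i \in \{1, 2\}$. Its total weight is $2m$, and checking that $f$ is an RDRD-function is routine: each $0$-vertex $v_{i,j}$ with $i \in \{1, 2\}$ has the two neighbors $v_{0, j-1}$ and $v_{0, j+1}$ of weight $2$ (satisfying the ``two neighbors assigned $2$'' option of the double Roman condition), while its other two neighbors $v_{i', j\pm 1}$ for $i' \in \{1, 2\} \setminus \{i\}$ are $0$-vertices, so the restrained condition also holds.

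For the lower bound, let $f$ be an arbitrary $\gamma_{rdR}(C_3 \times C_m)$-function and set $T_j := f_{j-1} + f_j + f_{j+1}$ (indices modulo $m$). Since every $f_k$ occurs in exactly three consecutive triples, $\sum_j T_j = 3 \sum_j f_j$, so the goal $\sum_j f_j \ge 2m$ is equivalent to $\sum_j T_j \ge 6m$, which in turn would follow most cleanly from the pointwise bound $T_j \ge 6$ for every $j$. I would establish this by case analysis on $f_j$ driven by Lemma \ref{lembag}: when $f_j \in \{0, 1\}$, parts (L1) or (L2) give $f_{j-1} + f_{j+1} \ge 6 - f_j$; when $f_j = 3$, (L3) gives $f_{j-1} + f_{j+1} \ge 3$; when $f_j \in \{4, 5\}$, (L4) forces a neighbor of weight at least $2$, so $T_j \ge f_j + 2 \ge 6$; and for $f_j \ge 6$ the bound is trivial.

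The only case not covered by Lemma \ref{lembag} is $f_j = 2$, which is the anticipated main obstacle. If $f_{j-1} + f_{j+1} \ge 4$ then $T_j \ge 6$ and we are done for this column, so the delicate situation is $f_{j-1} + f_{j+1} \le 3$, in which at least one of $f_{j-1}, f_{j+1}$ equals $0$ or $1$. Invoking (L1) or (L2) at that low-valued neighbor forces a large compensating value two columns away (for instance $f_{j-2} \ge 4$ if $f_{j-1} = 0$, or $f_{j-2} \ge 3$ if $f_{j-1} = 1$), which produces a surplus in the neighboring triple. I would close the argument by amortizing the deficit of each ``bad'' triple against the surplus generated two columns away, using an extended window of four or five consecutive columns and taking care to avoid double-counting when consecutive bad configurations appear, so that every column is ultimately charged its fair share of $2$. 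Combining the matching bounds then yields $\gamma_{rdR}(C_3 \times C_m) = 2m$.
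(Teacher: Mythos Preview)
Your upper-bound construction is the same as the paper's up to the symmetry of $C_3$ (the paper puts the $2$'s in row $1$ rather than row $0$). For the lower bound you and the paper both drive the argument with Lemma~\ref{lembag}, but the bookkeeping is organized differently. The paper does not pass through the triple sums $T_j$ at all; instead it runs a direct discharging (``bagging'') on the columns themselves: each column with $f_j\ge 3$ is made the center of a bag that absorbs any adjacent column of weight $\le 1$ (with L1--L4 guaranteeing the bag still averages $\ge 2$ per column), and every column with $f_j=2$ is simply bagged by itself. In that scheme $f_j=2$ is free---it contributes exactly the target $2$---so all the work goes into matching the deficit columns $f_j\le 1$ to their high-weight neighbors.

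Your triple-sum reformulation is legitimate, but it relocates the difficulty to precisely the case the paper's scheme handles for free. A column pattern such as $\dots,4,0,2,0,4,\dots$ is consistent with L1--L4 and yields $T_j=0+2+0=2$ at the middle column while $T_{j\pm 1}=T_{j\pm 2}=6$ exactly, so there is no surplus within two columns; the compensating surplus $T=10$ sits three columns away. Thus the ``deficit two columns away'' heuristic you describe is not quite enough, and making the amortization precise---choosing windows, ensuring surpluses are not double-spent when bad triples cluster---ends up reconstructing a discharging argument equivalent to the paper's bagging. Your plan can be completed, but the paper's column-level scheme is cleaner here because it never creates a deficit at $f_j=2$ in the first place.
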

\makeatletter
\newenvironment{breakablealgorithm}
  {
   \begin{center}
     \refstepcounter{algorithm}
     \hrule height.8pt depth0pt \kern2pt
     \renewcommand{\caption}[2][\relax]{
       {\raggedright\textbf{\ALG@name~\thealgorithm} ##2\par}%
       \ifx\relax##1\relax 
         \addcontentsline{loa}{algorithm}{\protect\numberline{\thealgorithm}##2}%
       \else 
         \addcontentsline{loa}{algorithm}{\protect\numberline{\thealgorithm}##1}%
       \fi
       \kern2pt\hrule\kern2pt
     }
  }{
     \kern2pt\hrule\relax
   \end{center}
  }
\makeatother%
\begin{proof}
By L1 and L2 in Lemma \ref{lembag}, for each $f_j\le1$, there must be $f_{j-1}\ge3$ or $f_{j+1}\ge3$. Therefore we use the bagging approach to put $V^j$ $(0\le j\le m-1)$ into five $B_s$ bags. Actually, we collect vertices with the column weights at least $3$ and vertices with the column weights at most $1$ into one of the corresponding $B_s$ bags according to different cases. We can find that the average of the weights of all columns in the same bag is at least 2. In this process, if the vertices in a column $j$ have not been bagged, we denote $D[j]=0$, otherwise $D[j]=1$.
\renewcommand{\thealgorithm}{Procedure 1}
\begin{breakablealgorithm}
        \caption{Bagging Approach for $C_{3}\times C_{m}$}
        \begin{algorithmic}[1] 
            \STATE Initialization: $s=s_1=s_2=s_3=s_4=0$; $D[j]=0$ for $j=0$ up to $j=m-1$;
             \FOR {every $j$ with $f_{j}\geq 6\wedge D[j]=0$}
                    \STATE $s=s+1$; $D[j]=1$; $B_{s}=V^j$;
                    \IF{$f_{j-1}\le1$}
                    \STATE $B_{s}=B_{s}\cup V^{j-1}$;$D[j-1]=1$;
                    \ENDIF
                    \IF{$f_{j+1}\le1$}
                    \STATE $B_{s}=B_{s}\cup V^{j+1}$;$D[j+1]=1$;
                    \ENDIF
                    \STATE $\sum_{V^j\subseteq \cup_{t=1}^{s} B_{t}}f(j)\geq \frac{|\cup_{t=1}^{s} B_{t}|}{3}\times 2$; $s_1=s$;
             \ENDFOR

             \FOR {every $j$ with $4\le f_{j}\le 5\wedge D[j]=0$}
                    \STATE $s=s+1$; $D[j]=1$; $B_{s}=V^j$;
                    \IF{$f_{j-1}\le1$, by L4, $f_{j+1}\ge2$}
                    \STATE $B_{s}=B_{s}\cup V^{j-1}$;$D[j-1]=1$;
                    \ENDIF
                    \IF{$f_{j+1}\le1$, by L4, $f_{j-1}\ge2$}
                    \STATE $B_{s}=B_{s}\cup V^{j+1}$;$D[j+1]=1$;
                    \ENDIF
                    \STATE $\sum_{V^j\subseteq \cup_{t=s_{1}+1}^{s} B_{t}}f(j)\geq \frac{|\cup_{t={s_{1}+1}}^{s} B_{t}|}{3}\times 2$; $s_2=s$;
             \ENDFOR

              \FOR{every $j$ with $f_{j}=3\wedge D[j]=0$}
                    \STATE $s=s+1$; $D[j]=1$; $B_{s}=V^j$;
                    \IF{$f_{j-1}=1$, by L3, $f_{j+1}\ge2$}
                    \STATE $B_{s}=B_{s}\cup V^{j-1}$;$D[j-1]=1$;
                    \ENDIF
                    \IF{$f_{j+1}=1$, by L3, $f^{j-1}\ge2$}
                    \STATE $B_{s}=B_{s}\cup V^{j+1}$;$D[j+1]=1$;
                    \ENDIF
                    \STATE $\sum_{V^j\subseteq \cup_{t=s_{2}+1}^{s} B_{t}}f(j)\geq \frac{|\cup_{t={s_{2}+1}}^{s} B_{t}|}{3}\times 2$; $s_3=s$;
             \ENDFOR

             \FOR{every $j$ with $f_{j}=3\wedge D[j]=0$}
                    \STATE $s=s+1$; $D[j]=1$; $B_{s}=V^j$;
                    \IF{$f_{j-1}=0$, by L1 and L3, $f_{j-2}\ge3$, $f_{j+1}\ge3$}
                    \STATE $B_{s}=B_{s}\cup V^{j-1}\cup V^{j-2}$;$D[j-1]=D[j-2]=1$;
                    \ENDIF
                    \IF{$f_{j+1}=0$, by L1 and L3, $f_{j-1}\ge3$, $f_{j+2}\ge3$}
                    \STATE $B_{s}=B_{s}\cup V^{j+1}\cup V^{j+2}$;$D[j+1]=D[j+2]=1$;
                    \ENDIF
                    \STATE $\sum_{V^j\subseteq \cup_{t=s_{3}+1}^{s} B_{t}}f(j)\geq \frac{|\cup_{t={s_{3}+1}}^{s} B_{t}|}{3}\times 2$; $s_4=s$;
             \ENDFOR

                    \FOR{every $j$ with $f_{j}=2\wedge D[j]=0$}
                    \STATE $s=s+1$; $D[j]=1$; $B_{s}=V^j$;
                    \STATE $\sum_{V^j\subseteq \cup_{t=s_{4}+1}^{s} B_{t}}f(j)\geq \frac{|\cup_{t={s_{4}+1}}^{s} B_{t}|}{3}\times 2$;
             \ENDFOR
        \end{algorithmic}\label{procedure3}
\end{breakablealgorithm}

Thus we have
\begin{equation*}
\begin{array}{lll}
\vspace{0.1cm}
f(V)&=&\sum_{i=0}^{m-1}f(j)\\&=&\sum_{V^j\subseteq \cup_{t=1}^{s_1} B_{t}}f(j)+\sum_{V^j\subseteq \cup_{t=s_{1}+1}^{s_2} B_{t}}f(j)+\sum_{V^j\subseteq \cup_{t=s_{2}+1}^{s_3} B_{t}}f(j)\\
\vspace{0.1cm}&+&\sum_{V^j\subseteq \cup_{t=s_{3}+1}^{s_4} B_{t}}f(j)+\sum_{V^j\subseteq \cup_{t=s_{4}+1}^{s} B_{t}}f(j) \\
\vspace{0.1cm}&\geq & \frac{|\cup_{t=1}^{s_1} B_{t}|}{3}\times 2+\frac{|\cup_{t={s_{1}+1}}^{s_2} B_{t}|}{3}\times 2+\frac{|\cup_{t={s_{2}+1}}^{s_3} B_{t}|}{3}\times 2+\frac{|\cup_{t={s_{3}+1}}^{s_4} B_{t}|}{3}\times 2+\frac{|\cup_{t={s_{4}+1}}^{s} B_{t}|}{3}\times 2\\
&\geq &2m.
\end{array}
\end{equation*}

Define $f:V(C_3\times C_m)\rightarrow\{0,1,2,3\}$ by $f(v_{1,j})=2$ for $j\in\{0,\ldots,m-1\}$, $f(v_{i,j})=0$ otherwise. One can check that it is a RDRD-function on $C_3\times C_m$ with weight $2m$, thus $\gamma_{rdR}(C_3\times C_m)\le2m$. Therefore, $\gamma_{rdR}(C_3\times C_m)=2m$.
\end{proof}

\section{Corona Operator}
In this section, we study the restrained double Roman domination number of $G\odot H$ for any graph $G$ and $H$. We obtain the exact values of $\gamma_{rdR}(G\odot H)$ when $(H\ncong K_1)$, and present the sharp bounds of $\gamma_{rdR}(G\odot K_1)$. Further, the exact values of $\gamma_{rdR}(G\odot K_1)$ where $G$ is a complete graph, a cycle, a path or a complete bipartite graph are determined. In addition, we present the restrained double Roman domination number of $(G\odot K_1)\odot K_1$.

\begin{theorem}
For any connected graphs $G$ and any $H\ncong K_1$, then $\gamma_{rdR}(G\odot H)=3n$, where $n=|V(G)|$.
\end{theorem}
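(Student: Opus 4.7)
The plan is to establish $\gamma_{rdR}(G\odot H)=3n$ by matching upper and lower bounds. For the upper bound, I would exhibit the explicit function that assigns $f(v_i)=3$ to every vertex $v_i\in V(G)$ and $f(w)=0$ to every vertex $w$ in each of the $n$ copies $H_i$ of $H$. The resulting weight is $3n$. Checking the RDRD conditions is routine: every $0$-vertex $w\in V(H_i)$ has the neighbor $v_i$ with $f(v_i)=3$, so the double Roman condition holds; no vertex has label $1$, so the ``label $1$'' condition is vacuous; and since $H\ncong K_1$ (and is connected, hence has no isolated vertices), every $w\in V(H_i)$ has at least one neighbor inside $H_i$, which is also labelled $0$, so the subgraph induced by $V_0$ contains no isolated vertex.

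For the lower bound, I would show that every RDRD-function $f$ on $G\odot H$ has weight at least $3$ on each ``block'' $S_i:=\{v_i\}\cup V(H_i)$. Writing $a_i=f(v_i)$ and $h_i=\sum_{w\in V(H_i)}f(w)$, the key claim is that $a_i+h_i\geq 3$ for every $i\in\{1,\dots,n\}$. The structural observation that forces this is that every $w\in V(H_i)$ has all of its neighbors inside $S_i$ (namely $v_i$ and the $H$-neighbors of $w$ in the copy), so $w$ must be ``served'' entirely from within $S_i$; moreover $|V(H_i)|\geq 2$ gives $|S_i|\geq 3$, so some vertex of $S_i$ is labelled $0$.

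I would then prove the claim by contradiction: assume $a_i+h_i\leq 2$ and enumerate the possible distributions of this mass over $S_i$, namely $a_i+h_i\in\{0,1,2\}$ with all splits of $a_i$ and $h_i$. In each case I would pick a $0$-vertex $w\in V(H_i)$ (one must exist since the total mass on $|S_i|\geq 3$ vertices is at most $2$, and at least one $V(H_i)$-vertex must be $0$) and show the double Roman requirement at $w$ fails: its neighborhood $\{v_i\}\cup N_H(w)\subseteq S_i$ carries total weight $\leq 2$, so it cannot contain a neighbor of value $3$, nor two neighbors of value $2$ (since only $v_i$ might have a value $\geq 2$, giving at most one such neighbor). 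Summing $a_i+h_i\geq 3$ over all $i$ then yields $f(V(G\odot H))\geq 3n$.

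The main obstacle will be the restrained condition in the upper-bound construction: if $H$ had an isolated vertex $w$, then $w$ in $H_i$ would have only $v_i$ as a neighbor in $G\odot H$, hence would be isolated in the induced $0$-subgraph, breaking the construction and forcing strictly larger weight. I would therefore read the hypothesis ``$H\ncong K_1$'' together with the implicit assumption that $H$ is connected (so that every vertex of each copy has an in-copy neighbor to pair with in $V_0$), under which the two bounds match exactly at $3n$ and the theorem follows.
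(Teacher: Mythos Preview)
Your proposal is correct and follows essentially the same approach as the paper: the upper bound via assigning $3$ to each vertex of $G$ and $0$ elsewhere, and the lower bound by arguing that each block $\{v_i\}\cup V(H_i)$ must carry weight at least $3$ because every vertex of $H_i$ has its closed neighbourhood contained in that block. Your write-up is in fact considerably more careful than the paper's one-line lower bound (``each copy needs weight at least $3$''); the only small wrinkle is that in the case $|V(H)|=2$, $a_i=0$, $h_i=2$ with labels $(1,1)$ there is no $0$-vertex in $V(H_i)$, but your planned case enumeration would catch this via the label-$1$ condition instead, so the argument goes through.
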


\begin{proof}
A RDRD-function of $G\odot H$ can be obtained by assigning $3$ to all vertices of $G$, and $0$ to the remaining vertices in $G\odot H$. Thus $\gamma_{rdR}(G\odot H)\le 3n$. Conversely, the $n$ copies of $H$ are pairwise disjoint in $G\odot H$. And further, each copy need a weight of at least 3, since the definition of a RDRD-function and $H \ncong K_1$. It follows that $\gamma_{rdR}(G\odot H)\ge 3n$. Thus the theorem holds.
\end{proof}

\begin{theorem}\label{GK1}
For any connected graph $G$ of order $n$, then $2n+1\le\gamma_{rdR}(G\odot K_1)\le3n$.
\end{theorem}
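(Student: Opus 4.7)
For the upper bound $\gamma_{rdR}(G\odot K_1)\le 3n$, I would simply exhibit an RDRD-function. Define $f(v)=2$ for every $v\in V(G)$ and $f(v')=1$ for each pendant $v'$ attached to $v$. The weight is $2n+n=3n$. Each pendant carries label $1$ and is adjacent to its support vertex of label $2$, so the condition for label-$1$ vertices is satisfied; no vertex is labeled $0$, so the restrained condition is vacuous. Hence this is an RDRD-function and $\gamma_{rdR}(G\odot K_1)\le 3n$.

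For the lower bound $\gamma_{rdR}(G\odot K_1)\ge 2n+1$, let $f$ be any RDRD-function of $G\odot K_1$ and analyze the pair $\{v,v'\}$ for each $v\in V(G)$. The first key observation I would prove is that $f(v')\ge 1$ for every pendant $v'$: if $f(v')=0$, then since $v'$ has only the neighbor $v$, the double-Roman condition forces $f(v)=3$, but then $v'$ has no neighbor labeled $0$, violating the restrained condition. The second observation is that if $f(v')=1$, the label-$1$ condition forces $f(v)\ge 2$, since $v$ is the unique neighbor of $v'$.

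Now I would partition $V(G)$ according to the pendant's label: let $A=\{v:f(v')=1\}$, $B=\{v:f(v')=2\}$, $C=\{v:f(v')=3\}$. From the two observations, $f(v)+f(v')\ge 3$ for $v\in A\cup C$ and $f(v)+f(v')\ge 2$ for $v\in B$, giving
\[
w(f)\ \ge\ 3|A|+2|B|+3|C|\ =\ 2n+|A|+|C|.
\]
If $|A|+|C|\ge 1$ we are done, so the one remaining case is $A=C=\emptyset$, i.e.\ every pendant gets label $2$. In that case I still need one extra unit somewhere, and for this I would show that not every $v\in V(G)$ can carry label $0$: if $f(v)=0$ for all $v\in V(G)$, then each $v$ has $f(v')=2$ as its only neighbor with positive label (every $G$-neighbor is $0$), so $v$ has just one label-$2$ neighbor and no label-$3$ neighbor, contradicting the label-$0$ condition. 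Hence some $v\in V(G)$ has $f(v)\ge 1$, pushing $w(f)\ge 2n+1$.

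\textbf{Main obstacle.} There is no deep step; the work is a careful pair-by-pair accounting. The only subtle point is the final $+1$: the pair-wise bound alone only delivers $2n$ in the all-$(0,2)$ configuration, so one must separately rule out that configuration by invoking the double-Roman condition on the vertices of $G$ (which is where the connectedness/structure of $G\odot K_1$—namely that pendants are the only neighbors of support vertices outside $G$—enters the argument).
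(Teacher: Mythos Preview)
Your proof is correct and follows essentially the same outline as the paper's: for the lower bound, both arguments show each pair $\{v,v'\}$ contributes at least $2$, then separately rule out the all-$(0,2)$ configuration to squeeze out the extra $+1$. Your write-up is in fact more explicit than the paper's, which handles the case analysis rather tersely.

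The one genuine difference is in the upper-bound construction. The paper assigns $3$ to every pendant and $0$ to every vertex of $G$; this requires $G\not\cong K_1$ so that the restrained condition holds on $V(G)$, and the case $G\cong K_1$ is treated separately. Your $(2,1)$-assignment (label $2$ on $V(G)$, label $1$ on pendants) has no label-$0$ vertices at all, so it works uniformly for every $n\ge 1$ without a case split. Both constructions give weight $3n$, but yours is slightly cleaner.
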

\begin{proof}
For $G\ncong K_1$, a RDRD-function can be obtained by assigning 3 to the pendant vertices of $G\odot K_1$, and $0$ to the remaining vertices. Otherwise, we assign 1 and 2 to the two vertices respectively. Thus $\gamma_{rdR}(G\odot K_1)\le 3n$, where $n$ is the order of $G$. Now we prove the inverse inequality. Each pendant vertex of $G\odot K_1$ belongs to either $V_2 \cup V_3$ or $V_1$ adjacent to the vertex in $V_2\cup V_3$. We claim that if all pendant vertices are assigned $2$, then at least one vertex in $G$ is assigned the value not less than 1. Otherwise, all vertices assigned $0$ in $G$ cannot be restrained double Roman dominated, which contradicts the definition of the RDRD-function. Thus $\gamma_{rdR}(G\odot H)\ge 2n+1$.
\end{proof}
Thereafter we show the bounds in Theorem \ref{GK1} are sharp.
\begin{theorem}
\begin{equation}{
\gamma_{rdR}(K_n\odot K_1)=\left\{
\begin{array}{llll} \vspace{0.1cm}
2n+1,                 &if \hspace{2mm}n\neq2,\\
6,                 &if \hspace{2mm}n=2.
\end{array}\notag
\right.}
\end{equation}
\end{theorem}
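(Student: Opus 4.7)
The plan is to split on the value of $n$, since the graph structure forces $n=2$ to behave differently.

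For $n=2$, the graph $K_2 \odot K_1$ is precisely the path $P_4$, so $\gamma_{rdR}(K_2 \odot K_1) = \gamma_{rdR}(P_4) = 4 + 2 = 6$ by Proposition \ref{pro1}(1), which handles that case immediately.

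For $n \neq 2$, the lower bound $\gamma_{rdR}(K_n \odot K_1) \ge 2n+1$ is inherited directly from Theorem \ref{GK1}, so all that remains is to exhibit a matching RDRD-function. The case $n=1$ is essentially automatic: $K_1 \odot K_1 = K_2$, and Theorem \ref{GK1} already sandwiches $3 = 2n+1 \le \gamma_{rdR}(K_1\odot K_1) \le 3n = 3$. For $n \ge 3$, I would label the $K_n$-vertices $v_1,\ldots,v_n$ and their attached pendants $v_1',\ldots,v_n'$, and define $f(v_n)=2$, $f(v_n')=1$, $f(v_i')=2$ and $f(v_i)=0$ for $1\le i\le n-1$. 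The total weight is $2+1+2(n-1)=2n+1$, so it suffices to verify $f$ is indeed an RDRD-function.

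For each $i<n$, the vertex $v_i$ has value $0$, and among its neighbors in $K_n\odot K_1$ both $v_i'$ and $v_n$ carry weight $2$, so the double-Roman condition at $v_i$ is met; the only vertex of weight $1$ is $v_n'$, whose unique neighbor $v_n$ has weight $2$. The delicate point, and the main obstacle, is the restrained condition: the zero-set $V_0 = \{v_1,\ldots,v_{n-1}\}$ induces $K_{n-1}$ inside $K_n\odot K_1$, which has no isolated vertex exactly when $n-1\ge 2$, i.e., $n\ge 3$. This is precisely why the construction breaks at $n=2$ (there $V_0$ would be the singleton $\{v_1\}$, with both neighbors $v_1'$ and $v_2$ lying outside $V_0$), and it is the structural reason the theorem must split the $n=2$ case out and pay the extra unit of weight to reach $6$.
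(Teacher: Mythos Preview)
Your proof is correct and takes essentially the same approach as the paper: the same RDRD-function (one pendant at $1$, its $K_n$-neighbor at $2$, all other pendants at $2$, the rest at $0$) for the upper bound, and Theorem~\ref{GK1} for the lower bound. You add welcome detail the paper omits---identifying $K_2\odot K_1\cong P_4$ to invoke Proposition~\ref{pro1}, and explicitly checking that $V_0=\{v_1,\dots,v_{n-1}\}$ induces $K_{n-1}$ so the restrained condition holds precisely when $n\ge 3$---but the underlying argument is the same.
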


\begin{proof}
It is easy to see $\gamma_{rdR}(K_2\odot K_1)=6$. For $K_n\odot K_1$ $(n\neq2)$, we assign 1 to a pendant vertex, 2 to its neighbor and the remaining pendant vertices, and 0 otherwise. It is a RDRD-function of $K_n\odot K_1$ $(n\neq2)$ and thus $\gamma_{rdR}(K_n\odot K_1)\le2n+1.$ By Proposition \ref{GK1}, we can obtain $\gamma_{rdR}(K_n\odot K_1)=2n+1$ when $n\neq2$.
\end{proof}

The exact value of double Roman domination number on $C_n\odot K_1$ is obtained by Anu V. and Aparna Lakshmanan S. \cite{Anu2019}. In the sequel, we determine the exact values of restrained double Roman domination number on $C_n\odot K_1$ and $P_n\odot K_1$.
\begin{theorem}\cite{Anu2019}\label{th2.3}
\begin{equation}{
\gamma_{dR}(C_n\odot K_1)=\left\{
\begin{array}{llll}  \vspace{0.1cm}
\frac{7n}{3},                 &if \hspace{2mm}n=3k,\\ \vspace{0.1cm}
\frac{7n+2}{3},                 &if \hspace{2mm}n=3k+1,\\
\frac{7n+1}{3},                 &if \hspace{2mm}n=3k+2,\\
\end{array}\notag
\right.}
\end{equation}
That is $\gamma_{dR}(C_n\odot K_1)=\lceil\frac{7n}{3}\rceil$.
\end{theorem}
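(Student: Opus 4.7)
The plan is to prove matching upper and lower bounds via a per-pair weight analysis. Label the cycle vertices $v_0,\ldots,v_{n-1}$ (indices modulo $n$) and let $v'_i$ denote the pendant attached to $v_i$. For any DRD-function $f$, define the pair weight $w_i := f(v_i)+f(v'_i)$, so the total weight is $\sum_{i=0}^{n-1} w_i$. The core of the argument will be a sharp bound on every triple of consecutive pair weights, from which the three cases collapse into a single ceiling formula.

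For the lower bound, I would first establish two preparatory facts. Since $v'_i$ is a leaf whose only neighbor is $v_i$, a short case check on $(f(v_i),f(v'_i))$ gives $w_i \geq 2$ for every $i$, with equality only when $f(v_i)=0$ and $f(v'_i)=2$: for instance, $f(v'_i)=0$ forces $f(v_i)=3$, $f(v'_i)=1$ forces $f(v_i)\geq 2$, and $f(v_i)=0$ forces $f(v'_i)\geq 2$ (since $v_i$ is the unique neighbor of $v'_i$). The key step is then the triple inequality $w_{i-1}+w_i+w_{i+1}\geq 7$ for every $i$. Suppose this sum equals $6$; then each $w_j=2$ for $j\in\{i-1,i,i+1\}$, forcing $f(v_j)=0$ and $f(v'_j)=2$ throughout the triple. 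But then $v_i$, having value $0$, has exactly one neighbor of value $2$ (its pendant $v'_i$) and no neighbor of value $3$, contradicting the DRD condition at $v_i$. Summing the triple inequality over $i$ modulo $n$ yields $3\sum_i w_i \geq 7n$, hence $\sum_i w_i \geq \lceil 7n/3\rceil$. Evaluating the ceiling gives $7k$, $7k+3$, and $7k+5$ for $n=3k,\,3k+1,\,3k+2$ respectively, matching the stated value in each residue class.

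For the matching upper bound I would give explicit block constructions. When $n=3k$, repeat the block with cycle values $(3,0,0)$ and pendant values $(0,2,2)$, giving weight $3k+4k=7k$. When $n=3k+1$, place value $3$ on the $k+1$ cycle vertices $v_0,v_3,\ldots,v_{3k-3},v_{3k}$ with their pendants $0$, and assign $0$ to the other cycle vertices with pendant $2$; the total weight is $3(k+1)+2(2k)=7k+3$. When $n=3k+2$, keep the same $k+1$ cycle-$3$'s and insert one extra cycle-$0$-with-pendant-$2$ pair, adding $5$ to the base count $7k$. In each case a routine check at the ``seam'' where the extras meet the repeating block verifies the DRD conditions at both the extra cycle vertices and their cycle-neighbours inherited from the block.

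The main obstacle will be the triple inequality in the lower bound: although the argument is compact, it relies crucially on the pendant structure forcing $w_i \geq 2$, so that a hypothetical ``all-2'' triple is actually realizable in function values and produces the undominated centre vertex. Once this bound is in hand the ceiling alignment is automatic, and the upper-bound seams for $n\not\equiv 0\pmod 3$ are then just a matter of local verification.
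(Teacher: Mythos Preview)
The paper does not prove this statement at all: it is quoted verbatim from \cite{Anu2019} as Theorem~\ref{th2.3} and then used as a black box in the lower-bound step of Theorem~\ref{procnk1}. So there is no in-paper proof to compare your proposal against.

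That said, your argument is correct, and in fact it is precisely the argument the paper writes out for the \emph{restrained} version. Your observation that $w_i\ge 2$ with equality only at $(f(v_i),f(v'_i))=(0,2)$, and the resulting triple bound $w_{i-1}+w_i+w_{i+1}\ge 7$ obtained by noting that an all-$2$ triple leaves the middle cycle vertex undominated, is exactly Lemma~\ref{lem1} (the paper states it for RDRD-functions, but its proof uses only the DRD condition at $u_{j+1}$, never the restrained clause). Summing cyclically and taking the ceiling is then the standard step. Your block constructions for the upper bound are the natural ones; the $n=3k+2$ description is slightly terse, but placing the $k+1$ cycle-$3$'s at positions $0,3,\dots,3k$ leaves a single gap of length~$2$ at the wrap-around and every $0$-cycle-vertex is adjacent to a $3$, so the verification goes through.
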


In the sequel, for $G\in\{C_n,P_n\}$, let $u_i\in V(G)$, ${u_i}'$ be the leaf neighbors of $u_i$ in $G\odot K_1$, where $i\in\{1,\ldots,n\}$. Let $f$ be an arbitrary RDRD-function on $G\odot K_1$, we denote $f_j=f(u_{j})+f({u'_j})$ $(1\le j\le n)$ and $f(V)=\sum_{j=1}^n(f_j)$.
\begin{lemma}\label{lem1}
For an arbitrary RDRD-function $f$ on $G \odot K_1$, we have
\begin{enumerate}[(1)]
  \item If $G=C_n$, then $f_j+f_{j+1}+f_{j+2}\ge7$, where $1\le j\le n$ and subscripts are taken modulo $n$.
  \item If $G=P_n$, then $f_j+f_{j+1}+f_{j+2}\ge7$, where $1\le j\le n-2$.
\end{enumerate}
\end{lemma}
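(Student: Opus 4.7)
The plan is to reduce the lemma to a pointwise lower bound on each $f_j$, and then to a simple forbidden-configuration argument. The key observation is that each pendant $u'_j$ has $u_j$ as its only neighbour in $G\odot K_1$, which forces local restrictions on $f(u_j), f(u'_j)$.

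First, I would establish the auxiliary claim that $f_j\ge 2$ for every $j$, and moreover that $f_j=2$ forces $(f(u_j),f(u'_j))=(0,2)$. The argument: if $f(u'_j)=0$, the restrained condition would require a neighbour of $u'_j$ in $V_0$, forcing $f(u_j)=0$, but then $u'_j$ has no neighbour of value $\geq 2$ and certainly no neighbour of value $3$, violating the double-Roman condition on the leaf. So $f(u'_j)\ge 1$. If $f(u'_j)=1$, then the condition at $u'_j$ forces $f(u_j)\ge 2$, giving $f_j\ge 3$. Otherwise $f(u'_j)\ge 2$ gives $f_j\ge 2$, with equality only when $f(u'_j)=2$ and $f(u_j)=0$ (since the case $(1,1)$ was ruled out).

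Next, suppose towards a contradiction that $f_j+f_{j+1}+f_{j+2}\le 6$. Combined with the pointwise bound $f_k\ge 2$, this forces $f_j=f_{j+1}=f_{j+2}=2$, and by the auxiliary claim $f(u_k)=0$ and $f(u'_k)=2$ for $k\in\{j,j+1,j+2\}$. I would now derive the contradiction by examining the central vertex $u_{j+1}$. In both parts of the lemma the indices make sense so that $u_{j+1}$ is an interior vertex whose full neighbourhood in $G\odot K_1$ is exactly $\{u_j,u_{j+2},u'_{j+1}\}$: in the cycle $C_n$ (indices mod $n$) this is automatic, and in the path $P_n$ the hypothesis $1\le j\le n-2$ places $u_{j+1}$ strictly between $u_1$ and $u_n$, where it has degree $3$. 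These three neighbours have $f$-values $0,0,2$, so $u_{j+1}$ (assigned $0$) has neither two neighbours of weight $2$ nor a neighbour of weight $3$, contradicting the double-Roman condition.

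There is no real obstacle beyond the bookkeeping in Step~1; the only mildly delicate point is correctly matching the index range to the degree of $u_{j+1}$, which is why the lemma is stated separately for the cycle (cyclic indices) and the path (where the endpoints $u_1,u_n$ must be avoided as the middle vertex of the triple). Once $f_j\ge 2$ with the stated equality case is in hand, the bound $f_j+f_{j+1}+f_{j+2}\ge 7$ follows immediately from integrality and the ruled-out all-$2$ configuration.
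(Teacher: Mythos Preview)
Your proposal is correct and follows essentially the same route as the paper: show $f_j\ge 2$, identify the unique equality configuration $(f(u_j),f(u'_j))=(0,2)$, and then note that three consecutive such columns leave the middle vertex $u_{j+1}$ undominated. You simply supply the details (the leaf analysis ruling out $f(u'_j)=0$ and forcing $f(u_j)\ge 2$ when $f(u'_j)=1$) that the paper compresses into ``by the definition of RDRD-function.''
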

\begin{proof}
By the definition of RDRD-function, we can easily get $f_j\ge2$$(1\le j\le n)$, so $f_j+f_{j+1}+f_{j+2}\ge6$. Suppose $f_j+f_{j+1}+f_{j+2}=6$, it means that $f_j=f_{j+1}=f_{j+2}=2$. The only possible case is $f(u_{j})=f(u_{j+1})=f(u_{j+2})=0$ and $f({u'_j})=f({u'_{j+1}})=f({u'_{j+2}})=2$. In this case, $u_{j+1}$ cannot be dominated, a contradiction. Thus $f_j+f_{j+1}+f_{j+2}\ge7$. Furthermore, the results in Lemma \ref{lem1} (1) and (2) are true for $C_n\odot K_1$ and $P_n\odot K_1$ respectively.
\end{proof}

\begin{theorem}\label{procnk1}
\begin{equation}{
\gamma_{rdR}(C_n\odot K_1)=\left\{
\begin{array}{llll}  \vspace{0.1cm}
\lceil\frac{7n}{3}\rceil,                 &if \hspace{2mm}n\equiv0,1(\bmod 3),\\
\lceil\frac{7n}{3}\rceil+1,                 &if \hspace{2mm}n\equiv2(\bmod 3).
\end{array}\notag
\right.}
\end{equation}
\end{theorem}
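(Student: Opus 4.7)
The plan is to prove matching upper and lower bounds. The basic lower bound comes from summing the cyclic inequality of Lemma \ref{lem1}(1): writing $T_j := f_j + f_{j+1} + f_{j+2}$ (indices mod $n$), each $T_j \ge 7$ and $\sum_{j=1}^{n} T_j = 3 f(V)$, giving $f(V) \ge \lceil 7n/3 \rceil$. This already yields the claimed value when $n \equiv 0, 1 \pmod 3$.

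For the upper bound I would use a repeating three-column \emph{brick} of weight 7 defined by
\begin{equation*}
(f(u_{3i+1}), f(u'_{3i+1}), f(u_{3i+2}), f(u'_{3i+2}), f(u_{3i+3}), f(u'_{3i+3})) = (2, 1, 0, 2, 0, 2).
\end{equation*}
The RDRD check is routine: the leaf with value $1$ sits beside a main vertex of value $2$; each main vertex of value $0$ has two $2$-neighbors (its own leaf and the next brick's initial main vertex) plus a $V_0$-neighbor (the other main-$0$ in its own brick), so the restrained condition holds. Tiling the cycle with $k$ bricks settles $n = 3k$ at weight $7k$. For $n = 3k + r$ with $r \in \{1, 2\}$, I would insert $r$ extra vertices between the last brick and the cycle's closure, each assigned $(f(u), f(u')) = (2, 1)$; the junction checks give total weight $7k + 3r$, matching $\lceil 7n/3 \rceil$ for $r = 1$ and $\lceil 7n/3 \rceil + 1$ for $r = 2$.

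The main obstacle is proving the strict inequality $f(V) \ge 7k + 6$ when $n = 3k + 2$. Suppose for contradiction $f(V) = 7k + 5$; then $\sum_j T_j = 7n + 1$, and since every $T_j \ge 7$ is an integer, exactly one $T_j$ equals $8$ and the others equal $7$. By cyclic symmetry assume $T_1 = 8$. The consecutive equalities $T_{j+1} - T_j = 0$ for $j = 2, \ldots, n - 1$ force $f_{j+3} = f_j$, while the two broken equalities involving $T_1$ give $f_4 = f_1 - 1$ and $f_3 = f_n + 1$. Together with $f_j \ge 2$ for all $j$ and the equation $f_1 + 2f_2 = 7$ derived from $T_1 = 8$, one solves uniquely to obtain $f_1 = 3$, $f_2 = 2$, and hence $f_j = 3$ for $j \in \{1\} \cup \{3i : 1 \le i \le k\}$ and $f_j = 2$ at every other position.

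To reach the contradiction, observe first that $f(u'_j) \ge 1$ for every leaf: if $f(u'_j) = 0$, the restrained condition forces $u_j \in V_0$, but then $u'_j$ has no value-$\ge 2$ neighbor. Hence $f(u_j) \le f_j - 1 \le 2$, so no main vertex takes value $3$. Each non-special index forces $(f(u_j), f(u'_j)) = (0, 2)$, and then $u_j \in V_0$ is RDRD-dominated only if some cycle-neighbor $u_{j \pm 1}$ has value $2$. Each special vertex $u_{3i}$ ($1 \le i \le k$) is the unique special cycle-neighbor of the non-special vertices $u_{3i - 1}$ and $u_{3i + 1}$, forcing $f(u_{3i}) = 2$; similarly $f(u_1) = 2$, since $u_{3k+2}$'s only special cycle-neighbor is $u_1$. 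But then all three neighbors $u_1, u_3, u'_2$ of the non-special vertex $u_2$ carry positive values, so $u_2 \in V_0$ is isolated in $G[V_0]$, contradicting the restrained condition and completing the lower bound.
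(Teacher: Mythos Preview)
Your proof is correct and follows essentially the same strategy as the paper: explicit constructions for the upper bound, and the cyclic triple-sum inequality $T_j = f_j + f_{j+1} + f_{j+2} \ge 7$ from Lemma~\ref{lem1} for the lower bound. A few differences and minor wrinkles are worth noting.

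For $n\equiv 0,1\pmod 3$ the paper invokes Theorem~\ref{th2.3} (the known $\gamma_{dR}$ value), whereas your direct averaging $3f(V)=\sum_j T_j\ge 7n$ is self-contained and just as short. For $n\equiv 2\pmod 3$ the paper argues in two steps (Claim~\ref{claimge9}: some $T_j\ge 9$ already suffices; Claim~\ref{claimeq8}: otherwise $|\{j:T_j=8\}|\ge 2$), while you go straight to the extremal configuration $\sum_j T_j = 7n+1$, deduce that exactly one $T_j=8$, solve for the column weights, and exhibit the restrained violation at $u_2$. Your route is a bit more streamlined; the paper's case analysis is more granular but reaches the same endpoint.

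Two small points of exposition: the equation $f_1+2f_2=7$ actually comes from $T_n=7$ together with $f_n=f_2$ (not from $T_1=8$); and when you assert that a non-special column forces $(f(u_j),f(u'_j))=(0,2)$, you should say explicitly why $(1,1)$ is impossible (the leaf $u'_j$ with value $1$ would have no neighbour of value $\ge 2$). Also, your sentence ``$u_{3i-1}$ has $u_{3i}$ as its unique special cycle-neighbour'' fails for $i=1$ (since $u_2$ is flanked by the special vertices $u_1$ and $u_3$); however, $f(u_3)=2$ is still forced by the non-special vertex $u_4$, so the conclusion stands.
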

\begin{proof}
Figure \ref{fig1} shows the graph of $C_n\odot K_1$.
\begin{figure}[H]
\centering
\includegraphics[scale=0.45]{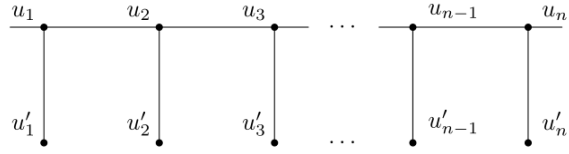}
\caption{Graph $C_n\odot K_1$.}
\label{fig1}
\end{figure}
\textbf{Case 1.} $n\equiv 0(\bmod\ 3)$. Define $f:V(C_n\odot K_1)\rightarrow\{0,1,2,3\}$ by $f(u_{3k+3})=f(u'_{3k+1})=f(u'_{3k+2})=2$, $f(u'_{3k+3})=1$ for $k=0,\ldots,\frac{n}{3}-1$ and $f(u_{i})=f(u'_{i})=0$ otherwise. It is clear that $f$ is a RDRD-function of $C_n\odot K_1$ with weight $\frac{7n}{3}$, thus $\gamma_{rdR}(C_n\odot K_1)\le\frac{7n}{3}$.

\textbf{Case 2.} $n\equiv 1(\bmod\ 3)$. Define $f:V(C_n\odot K_1)\rightarrow\{0,1,2,3\}$ by $f(u_{3k+3})=f(u'_{3k+1})=f(u'_{3k+2})=f(u_{n})=2$, $f(u'_{3k+3})=f(u'_{n})=1$ for $k=0,\ldots,\lfloor\frac{n}{3}\rfloor-1$ and $f(u_{i})=f(u'_{i})=0$ otherwise. It is clear that $f$ is a RDRD-function of $C_n\odot K_1$ with $f(V)=\frac{n-1}{3}\times7+3=\frac{7n+2}{3}=\lceil\frac{7n}{3}\rceil$, thus $\gamma_{rdR}(C_n\odot K_1)\le\lceil\frac{7n}{3}\rceil$.

\textbf{Case 3.} $n\equiv 2(\bmod\ 3)$. Define $f:V(C_n\odot K_1)\rightarrow\{0,1,2,3\}$ by $f(u_{3k+3})=f(u'_{3k+1})=f(u'_{3k+2})=f(u_{n})=f(u'_{n-1})=2$, $f(u'_{3k+3})=f(u_{n-1})=f(u'_{n})=1$ for $k=0,\ldots,\lfloor\frac{n}{3}\rfloor-1$ and $f(u_{i})=f(u'_{i})=0$ otherwise. It is clear that $f$ is a RDRD-function of $C_n\odot K_1$ with $f(V)=\frac{n-2}{3}\times7+6=\frac{7n+4}{3}=\lceil\frac{7n}{3}\rceil+1$, thus $\gamma_{rdR}(C_n\odot K_1)\le\lceil\frac{7n}{3}\rceil+1$.

Now we prove the inverse inequality. Since $\gamma_{rdR}(G)\ge\gamma_{dR}(G)$ for any graph $G$, and the known result in Theorem \ref{th2.3}, we can obtain that $\gamma_{rdR}(C_n\odot K_1)\ge\lceil\frac{7n}{3}\rceil$. Thus $\gamma_{rdR}(C_n\odot K_1)=\lceil\frac{7n}{3}\rceil$ for $n\equiv0,1(\bmod\ 3)$ can be obtained. We claim that $\gamma_{rdR}(C_n\odot K_1)\ge\lceil\frac{7n}{3}\rceil+1$ for $n\equiv2(\bmod\ 3)$. Let $f$ be a $\gamma_{rdR}(C_n\odot K_1)$-function.

\begin{claim}\label{claimge9}
For $C_n\odot K_1$, if there exists a $j$ such that $f_{j}+f_{j+1}+f_{j+2}\ge9$, then $\gamma_{rdR}(C_n\odot K_1)\ge\lceil\frac{7n}{3}\rceil+1$ for $n\equiv2(\bmod\ 3)$.
\end{claim}
\begin{proof}
If there exists a $j_1$ such that $f_{j_1}+f_{j_{1}+1}+f_{j_{1}+2}\ge9$, then
\begin{align}
\nonumber 3 f(V)&=3\sum\nolimits_{j=1}^{n}f_j=(f_{j_1}+f_{j_1+1}+f_{j_1+2})+\sum\nolimits_{j\in\{1,\ldots,n\}-\{j_1\}}(f_j+f_{j+1}+f_{j+2})\\
\nonumber &\geq9+7\times (n-1)\\
\nonumber &=7n+2.
\end{align}
Thus $\gamma_{rdR}(C_n\odot K_1)\ge\frac{7n+2}{3}=\lceil\frac{7n}{3}\rceil+1$ for $n\equiv2(\bmod\ 3)$.
\end{proof}
By Claim \ref{claimge9}, we may assume that $f_{j}+f_{j+1}+f_{j+2}\le8$ for all $j$$(1\le j\le n)$, where subscripts are taken modulo $n$, for otherwise the desired result follows. Recall Lemma \ref{lem1}, $7\le f_{j}+f_{j+1}+f_{j+2}\le8$ for all $j$.
\begin{claim}\label{claimeq8}
For $C_n\odot K_1$, let $S=\{j\colon f_{j}+f_{j+1}+f_{j+2}=8\}$, there must be $|S|\ge2$ for $n\equiv2(\bmod\ 3)$.
\end{claim}
\begin{proof}
 Suppose $|S|=0$, it means that $f_j+f_{j+1}+f_{j+2}=7$ for all $j$. In this case, $(f_j,f_{j+1},f_{j+2})\in\{(2,2,3),(2,3,2),(3,2,2)\}$. Without loss of generality, assume $f_1=3$. It follows $f_2=f_3=2$, $f_4=3$. Continue in this way, we have $f(u_{3k+1})=f(u'_{3k+2})=2$, $f(u'_{3k+1})=1$ for $k=0,\ldots,\lfloor\frac{n}{3}\rfloor$, $f(u'_{3k+3})=2$ for $k=0,\ldots,\lfloor\frac{n}{3}\rfloor-1$ and $f(u_{i})=f(u'_{i})=0$ otherwise. Note that $f(u_{n})=0$ and $N(u_{n})\cap V_0=\emptyset$, which contradicts the definition of RDRD-function. Suppose $|S|=1$, say $j_1\in S$. In this case, for any $\gamma_{rdR}$-function, $(f_{j_1},f_{{j_1}+1},f_{{j_1}+2})\in\{(2,3,3),(3,3,2),(3,2,3)\}$. If $(f_{j_1},f_{{j_1}+1},f_{{j_1}+2})\in\{(2,3,3),(3,3,2)\}$, then $({j_1}+1)\in S$ or $({j_1}-1)\in S$, that is there must be more $j'$ belongs to $S$. This is a contradiction to $|S|=1$. If $(f_{j_1},f_{{j_1}+1},f_{{j_1}+2})=(3,2,3)$, then $f(u_{j_1+1})=0$ and $f(u'_{j_1+1})=2$. There must be a neighbor of $u_{j_1+1}$ assigned 0 by the RDRD function, say $f(u_{j_1+2})=0$. It follows $f(u'_{j_1+2})=3$. Since $|S|=1$, $f_{j_1+3}=f_{j_1+4}=2$. Note that $f(u_{{j_1}+3})=0$ and $u_{{j_1+3}}$ cannot be dominated. This is a contradiction.
\end{proof}
Thus for $n\equiv2(\bmod\ 3)$, there are at least two columns $j_1,j_2$ belong to the $S$ defined in Claim \ref{claimeq8}.
\begin{align}
\nonumber 3 f(V)&=3\sum\nolimits_{j=1}^{n}f_j=\sum\nolimits_{j\in\{j_1,j_2\}}(f_j+f_{j+1}+f_{j+2})+\sum\nolimits_{j\in\{1,\ldots,n\}-\{j_1,j_2\}}(f_j+f_{j+1}+f_{j+2})\\
\nonumber &\geq8+8+7\times (n-2)\\
\nonumber &=7n+2.
\end{align}
Thus $\gamma_{rdR}(C_n\odot K_1)\ge\frac{7n+2}{3}=\lceil\frac{7n}{3}\rceil+1$ for $n\equiv2(\bmod\ 3)$.
\end{proof}

\begin{theorem}\label{propnk1}
\begin{equation}{
\gamma_{rdR}(P_n\odot K_1)=\left\{
\begin{array}{llll} \vspace{0.1cm}
\lceil\frac{7n}{3}\rceil,                 &if \hspace{2mm}n\equiv1(\bmod 3),\\
\lceil\frac{7n}{3}\rceil+1,                 &if \hspace{2mm}n\equiv0, 2(\bmod 3).
\end{array}\notag
\right.}
\end{equation}
\end{theorem}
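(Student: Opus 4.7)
The plan is to prove the single identity $\gamma_{rdR}(P_n\odot K_1)=\lceil(7n+2)/3\rceil$, from which both cases of the theorem follow since $\lceil(7n+2)/3\rceil=\lceil 7n/3\rceil$ when $n\equiv 1\pmod 3$ and $\lceil(7n+2)/3\rceil=\lceil 7n/3\rceil+1$ when $n\equiv 0,2\pmod 3$. The lower bound combines Lemma \ref{lem1}(2) with two endpoint strengthenings, and the upper bound is a short tiling construction tailored to each residue class.

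For the lower bound I would first sharpen $f_j\ge 2$ at the boundary. Each pendant satisfies $f(u'_j)\ge 1$: otherwise the leaf $u'_j$ would need its unique neighbor $u_j$ to be assigned $3$ for domination, yet $u'_j\in V_0$ would simultaneously require $u_j\in V_0$. The same reasoning forces $f_1\ge 3$ and $f_n\ge 3$: if $f_1=2$ then $(f(u_1),f(u'_1))=(0,2)$, but then $u_1\in V_0$ needs a $V_0$-neighbor, which must be $u_2$, hence $f(u_2)=0$, contradicting the RDRD-condition on $u_1$ (its only other neighbor is $u'_1=2$). Summing Lemma \ref{lem1}(2) over the $n-2$ sliding windows and writing $S=\sum_{j}f_j$ gives
\begin{equation*}
3S-2f_1-f_2-f_{n-1}-2f_n=\sum_{j=1}^{n-2}(f_j+f_{j+1}+f_{j+2})\ge 7(n-2),
\end{equation*}
so the endpoint bounds $f_1,f_n\ge 3$ and $f_2,f_{n-1}\ge 2$ produce $3S\ge 7n+2$, i.e.\ $S\ge\lceil(7n+2)/3\rceil$. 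The degenerate case $n=3$ is handled directly by $f_1,f_3\ge 3$ and $f_2\ge 2$, giving $S\ge 8$.

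For the upper bound I would exhibit three explicit RDRD-functions, described column by column as pairs $(f(u_j),f(u'_j))$, all built from the weight-$7$ block $B=(0,2)(0,2)(2,1)$, whose two consecutive $(0,2)$ columns keep the $V_0$-vertices paired and render the interior domination checks trivial. For $n=3k+1$ I would take the prefix $(2,1)$ followed by $k$ copies of $B$, with total weight $3+7k=\lceil 7n/3\rceil$; for $n=3k+2$, the same prefix and $k$ copies of $B$ followed by a closing $(2,1)$, with total weight $7k+6=\lceil 7n/3\rceil+1$; for $n=3k$, the prefix $(0,3)(0,2)(2,1)$ followed by $k-1$ copies of $B$, with total weight $7k+1=\lceil 7n/3\rceil+1$ (which in particular yields the function $(0,3)(0,2)(2,1)$ of weight $8$ when $n=3$). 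In each construction only the two endpoint columns and the single prefix-to-tile transition require verification beyond what is already handled by the repeating block $B$.

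The main obstacle is matching the endpoint configuration to the residue-dependent ceiling. Because $u_1$ has only the two neighbors $u_2,u'_1$, the $V_0$-isolation rule forces $f_1\ge 3$ (rather than the $\ge 2$ that suffices in the cycle of Theorem \ref{procnk1}), and symmetrically for $u_n$; this extra endpoint cost is precisely what lifts the bound from $\lceil 7n/3\rceil$ to $\lceil 7n/3\rceil+1$ when $n\equiv 0,2\pmod 3$ and dictates the specific prefixes $(2,1)$ and $(0,3)(0,2)$ above. Once the correct prefix is chosen so that each construction hits its lower bound on the nose rather than overshooting by $1$, the remaining checks are local and routine.
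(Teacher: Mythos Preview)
Your proof is correct, and the lower-bound argument is genuinely different from the paper's. The paper establishes the lower bound for $n\equiv 1,2\pmod 3$ by quoting the cycle result $\gamma_{rdR}(P_n\odot K_1)\ge\gamma_{rdR}(C_n\odot K_1)$ from Theorem~\ref{procnk1}, and then for $n\equiv 0\pmod 3$ argues by contradiction that equality $\sum f_j=\tfrac{7n}{3}$ would force the rigid pattern $f_{3k+1}=3,\ f_{3k+2}=f_{3k+3}=2$, which fails at column $n$. Your route is self-contained: you prove the endpoint strengthening $f_1,f_n\ge 3$ directly (the pendant bound $f(u'_j)\ge 1$ together with the observation that $f(u'_1)=1$ forces $f(u_1)\ge 2$ leaves only $(0,2)$ to rule out), and then a single telescoping sum of Lemma~\ref{lem1}(2) over all $n-2$ windows yields $3S\ge 7(n-2)+2f_1+f_2+f_{n-1}+2f_n\ge 7n+2$ uniformly in the residue class. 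This buys you independence from the cycle theorem and a one-line inequality in place of three separate residue analyses; the paper's route, on the other hand, reuses work already done and makes the $+1$ discrepancy in the $n\equiv 0$ case visible as a concrete obstruction at the last column. Your upper-bound constructions are essentially reorganisations of the paper's, phrased via the repeatable block $B=(0,2)(0,2)(2,1)$, and they check out (including the boundary transitions). One cosmetic point: when you assert ``if $f_1=2$ then $(f(u_1),f(u'_1))=(0,2)$'' you are silently using that $(1,1)$ is excluded because $f(u'_1)=1$ would force $f(u_1)\ge 2$; stating this explicitly would make the endpoint claim airtight.
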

\begin{proof}
Figure \ref{fig2} shows the graph of $P_n\odot K_1$.
\begin{figure}[H]
\centering
\includegraphics[scale=0.45]{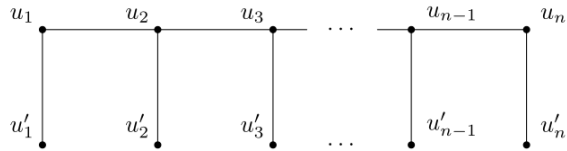}
\caption{Graph $P_n\odot K_1$.}
\label{fig2}
\end{figure}
\textbf{Case 1.} $n\equiv 0(\bmod\ 3)$. Define $f:V(P_n\odot K_1)\rightarrow\{0,1,2,3\}$ by $f(u_{3k+1})=f(u'_{3k+2})=2$, $f(u'_{3k+1})=1$ for $k=0,\ldots,\frac{n}{3}-1$, $f(u'_{3k+3})=2$ for $k=0,\ldots,\frac{n}{3}-2$, $f(u'_{n})=3$ and $f(u_{i})=f(u'_{i})=0$ otherwise. It is clear that $f$ is a RDRD-function of $P_n\odot K_1$ with $f(V)=\frac{n-3}{3}\times7+8=\frac{7n+3}{3}=\frac{7n}{3}+1$, thus $\gamma_{rdR}(P_n\odot K_1)\le\frac{7n}{3}+1$.

\textbf{Case 2.} $n\equiv 1(\bmod\ 3)$. Define $f:V(P_n\odot K_1)\rightarrow\{0,1,2,3\}$ by $f(u_{3k+1})=2$, $f(u'_{3k+1})=1$ for $k=0,\ldots,\lfloor\frac{n}{3}\rfloor$, $f(u'_{3k+2})=f(u'_{3k+3})=2$ for $k=0,\ldots,\lfloor\frac{n}{3}\rfloor-1$ and $f(u_{i})=f(u'_{i})=0$ otherwise. It is clear that $f$ is a RDRD-function of $P_n\odot K_1$ with $f(V)=\frac{n-1}{3}\times7+3=\frac{7n+2}{3}=\lceil\frac{7n}{3}\rceil$, thus $\gamma_{rdR}(P_n\odot K_1)\le\lceil\frac{7n}{3}\rceil$.

\textbf{Case 3.} $n\equiv 2(\bmod\ 3)$. Define $f:V(P_n\odot K_1)\rightarrow\{0,1,2,3\}$ by $f(u_{3k+1})=f(u'_{3k+2})=2$, $f(u'_{3k+1})=1$ for $k=0,\ldots,\lfloor\frac{n}{3}\rfloor$, $f(u'_{3k+3})=2$ for $k=0,\ldots,\lfloor\frac{n}{3}\rfloor-1$, $f(u_{n})=1$ and $f(u_{i})=f(u'_{i})=0$ otherwise. It is clear that $f$ is a RDRD-function of $P_n\odot K_1$ with $f(V)=\frac{n-2}{3}\times7+6=\frac{7n+4}{3}=\lceil\frac{7n}{3}\rceil+1$, thus $\gamma_{rdR}(P_n\odot K_1)\le\lceil\frac{7n}{3}\rceil+1$.

Now we prove the inverse inequality. Since $\gamma_{rdR}(P_n\odot K_1)\ge\gamma_{rdR}(C_n\odot K_1)$ and Theorem \ref{procnk1}, we can obtain that $\gamma_{rdR}(P_n\odot K_1)\ge\lceil\frac{7n}{3}\rceil$ for $n\equiv0,1(\bmod\ 3)$ and $\gamma_{rdR}(P_n\odot K_1)\ge\lceil\frac{7n}{3}\rceil+1$ for $n\equiv2(\bmod\ 3)$. Thus the results in Theorem \ref{propnk1} are true for $n\equiv1,2(\bmod\ 3)$. Now we claim that $\gamma_{rdR}(P_n\odot K_1)\ge\frac{7n}{3}+1$ for $n\equiv0(\bmod\ 3)$. Let $f$ be a $\gamma_{rdR}(P_n\odot K_1)$-function. By Lemma \ref{lem1}, for $P_n\odot K_1$, $f_j+f_{j+1}+f_{j+2}\ge7$, where $1\le j\le n-2$. In particular, $f_{3k+1}+f_{3k+2}+f_{3k+3}\ge7$, where $0\le k\le \frac{n}{3}-1$. Suppose $\gamma_{rdR}(P_n\odot K_1)=\frac{7n}{3}$, it means that $f_{3k+1}+f_{3k+2}+f_{3k+3}=7$ for $0\le k\le \frac{n}{3}-1$. Note that $f_1\ge3$, the only possible case is $f_{3k+1}=3, f_{3k+2}=f_{3k+3}=2$$(0\le k\le \frac{n}{3}-1)$. Note that $f_n=2$ and $f(u_{n})=0$, $u_{n}$ cannot be restrained double Roman dominated, which  contradicts the definition of RDRD-function. Thus $\gamma_{rdR}(P_n\odot K_1)\ge\frac{7n}{3}+1$ for $n\equiv0(\bmod\ 3)$.
\end{proof}

The following are some results of double Roman domination number on the corona operation presented in \cite{Anu2019}. Here we study the impact of corona operation on restrained double Roman domination number.

\begin{theorem}\label{Anu}\cite{Anu2019}
\begin{equation}{
\gamma_{dR}(K_{p,q}\odot K_1)=\left\{
\begin{array}{llll}
2(p+q)+1,                 &if \hspace{4.4mm}p=1 \ or\ q=1,\\
2(p+q+1),                 &otherwise.
\end{array}\notag
\right.}
\end{equation}
\end{theorem}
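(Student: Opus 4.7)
The plan is to prove the theorem by combining matching constructions with a packing-style lower bound exploiting the $p+q$ pairwise disjoint pendant--support pairs of $K_{p,q}\odot K_1$.

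First I would establish the upper bounds by exhibiting explicit DRD-functions. For $p=1$ (symmetric in $q=1$), writing $c$ for the center of $K_{1,q}$ and $\ell_1,\ldots,\ell_q$ for the leaves, I assign $f(c)=3$, $f(\ell_i')=2$ for every $i$, and $0$ elsewhere; every $0$-vertex is adjacent to $c$, giving a DRD-function of weight $3+2q=2(p+q)+1$. For $p,q\ge 2$, with parts $P=\{p_1,\ldots,p_p\}$ and $Q=\{q_1,\ldots,q_q\}$, I assign $f(p_1)=f(q_1)=3$, $f(p_i')=2$ for $i\ge 2$, $f(q_j')=2$ for $j\ge 2$, and $0$ otherwise. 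A short check confirms this is DRD (non-apex vertices of $P$ are dominated by $q_1$, non-apex vertices of $Q$ by $p_1$, and the two pendants $p_1',q_1'$ by their supports), of weight $2\cdot 3+2(p-1)+2(q-1)=2(p+q+1)$.

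For the matching lower bound, the key observation is that for any DRD-function $f$ and any pendant--support pair $(u,u')$ (with $u'$ the pendant), $f(u)+f(u')\ge 2$, with equality only when $f(u')=2$ and $f(u)=0$; this follows by inspecting each possible value of $f(u')$ and using that $u$ is the sole neighbor of $u'$. Summing over the $p+q$ disjoint pairs gives $\gamma_{dR}(K_{p,q}\odot K_1)\ge 2(p+q)$, and equality would force every pendant to be $2$ and every support to be $0$; but then any support $u\in V(K_{p,q})$ would have only $u'$ as a $2$-neighbor and no $3$-neighbor, violating the DRD condition at $u$. This already yields $\gamma_{dR}\ge 2(p+q)+1$, matching the upper bound when $p=1$ or $q=1$.

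For the case $p,q\ge 2$ I need to push further and show that at least two pairs must contribute sum $\ge 3$. Suppose for contradiction exactly one pair, say $(p_1,p_1')$ by symmetry, has sum $\ge 3$; then $f(p_i)=0$ and $f(p_i')=2$ for all $i\ge 2$, and $f(q_j)=0$ for all $j$. Picking any $i\ge 2$ (available since $p\ge 2$), the vertex $p_i$ has value $0$, all its $K_{p,q}$-neighbors $q_1,\ldots,q_q$ have value $0$, and its pendant $p_i'$ has value $2$; since $p_1\notin N(p_i)$ inside the bipartite graph, no neighbor of $p_i$ is a $3$-vertex either, contradicting DRD at $p_i$. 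Hence at least two pairs have sum $\ge 3$, giving $\gamma_{dR}\ge 2(p+q-2)+6=2(p+q+1)$. The main obstacle is precisely this last step---ruling out that one heavy pair can globally rescue the DRD condition at all other supports---and it depends critically on the bipartite non-adjacency of same-part vertices, which is what distinguishes the $p,q\ge 2$ case from the star case.
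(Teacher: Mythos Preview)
The paper does not prove this statement: Theorem~\ref{Anu} is quoted from \cite{Anu2019} as a known result and is used only as a black box in the proof of Proposition~\ref{kpqk1}. There is therefore no proof in the present paper to compare your argument against.

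That said, your argument is correct. The two constructions are valid DRD-functions of the claimed weights, and the pairing bound $f(u)+f(u')\ge 2$ together with the characterisation of equality ($f(u')=2$, $f(u)=0$) is exactly right. The two-step escalation---first ruling out all pairs tight, then for $p,q\ge 2$ ruling out a single heavy pair by exploiting that $p_i$ (with $i\ge 2$) is not adjacent to $p_1$ in the bipartite graph---cleanly yields the bounds $2(p+q)+1$ and $2(p+q+1)$. One remark you might make explicit: in the ``exactly one heavy pair'' case your contradiction does not use the actual value of the heavy pair, so it also covers the sub-case where that single pair already contributes $\ge 4$ (which would give the bound directly); your phrasing ``at least two pairs have sum $\ge 3$'' is then the genuinely correct conclusion.
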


\begin{theorem}\label{Anu2}\cite{Anu2019}
For any graph $G$, $\gamma_{dR}((G\odot K_1)\odot K_1)=5n$, where $n=|V(G)|$.
\end{theorem}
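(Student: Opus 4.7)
The plan is to prove $\gamma_{dR}((G \odot K_1) \odot K_1) = 5n$ by matching an explicit upper-bound construction against a gadget-based lower bound. I would first fix notation: for each $v_i \in V(G)$, let $u_i$ denote the pendant attached to $v_i$ in $G \odot K_1$; in the second corona, attach pendants $v_i'$ to $v_i$ and $u_i'$ to $u_i$. The four vertices $\{v_i, u_i, v_i', u_i'\}$ induce the path $v_i'\, -\, v_i\, -\, u_i\, -\, u_i'$, which I will call the $i$-th \emph{gadget}. The key structural fact is that only $v_i$ has edges leaving the gadget.

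For the upper bound, I would exhibit the function $f$ defined by $f(u_i)=3$, $f(v_i')=2$, and $f(v_i)=f(u_i')=0$ for every $i$. Each gadget contributes exactly $5$ to the weight, yielding $5n$ overall. Verifying that $f$ is a DRD-function is immediate: the leaf $u_i'$ has the $3$-neighbor $u_i$; the vertex $v_i$ has the $3$-neighbor $u_i$; and $v_i', u_i$ carry values at least $2$, so they need no further domination.

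For the lower bound, the plan is to prove the sharper local statement that for any DRD-function $f$ and any $i$,
\[
f(v_i) + f(u_i) + f(v_i') + f(u_i') \ge 5,
\]
and then sum over $i$. The critical observation is that $v_i'$, $u_i$ and $u_i'$ have all of their neighbors inside the $i$-th gadget, so the DRD-constraints at those three vertices are strictly local; only the constraint at $v_i$ can possibly be satisfied by an external neighbor. Using the standard leaf implications ($f(v_i')=0 \Rightarrow f(v_i)=3$ and $f(v_i')=1 \Rightarrow f(v_i) \ge 2$, and symmetrically for $u_i'$ with $u_i$), together with the DRD-condition at the interior vertex $u_i$ (whose only neighbors $v_i, u_i'$ sit in the gadget), I would enumerate the $16$ cases for $(f(v_i'), f(u_i')) \in \{0,1,2,3\}^2$ and confirm the lower bound $5$ in each.

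The main obstacle is the disciplined case analysis when $f(v_i)=0$, since external domination relieves $v_i$ of any local requirement. Even there, the vertices $u_i, u_i'$ and the leaf $v_i'$ must be handled internally, and the tuples $(f(v_i),f(u_i),f(v_i'),f(u_i'))=(0,3,2,0), (0,2,2,1), (0,1,2,2), (0,0,2,3)$ --- the only ones that reach the bound with $f(v_i)=0$ --- force the remaining three values to sum to exactly $5$. Together with the tight configurations with $f(v_i)\ge 2$, namely $(3,0,0,2)$ and $(2,0,1,2)$, this settles the gadget inequality; summing yields $\gamma_{dR}((G \odot K_1) \odot K_1) \ge 5n$, which combined with the upper bound gives equality.
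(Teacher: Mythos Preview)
The paper does not give its own proof of this statement; Theorem~\ref{Anu2} is quoted from \cite{Anu2019} and is used only as a black box (via $\gamma_{rdR}\ge\gamma_{dR}$) to obtain the lower bound in the subsequent theorem on $\gamma_{rdR}((G\odot K_1)\odot K_1)$. Your argument is therefore a self-contained proof where the paper offers none, and it is correct.

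The gadget decomposition you chose is the natural one: each block $\{v_i,u_i,v_i',u_i'\}$ meets the rest of the graph only at $v_i$, so the DRD constraints at the leaf $v_i'$, at the interior vertex $u_i$ (with $N(u_i)=\{v_i,u_i'\}$), and at the leaf $u_i'$ are purely local. Those three local constraints are exactly what force $f(v_i)+f(u_i)+f(v_i')+f(u_i')\ge 5$ regardless of how $v_i$ is dominated, and your case split on $(f(v_i'),f(u_i'))$ verifies this cleanly. Your upper-bound function $f(u_i)=3$, $f(v_i')=2$, $f(v_i)=f(u_i')=0$ is valid; for comparison, the construction the paper uses in the \emph{restrained} theorem translates, in your notation, to the pattern $(f(v_i),f(u_i),f(v_i'),f(u_i'))=(0,2,2,1)$, which is one of the tight tuples you already identified.
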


\begin{proposition}\label{kpqk1}
\begin{equation}{
\gamma_{rdR}(K_{p,q}\odot K_1)=\left\{
\begin{array}{llll}
3(p+q),                 &if \hspace{4.4mm}p=1 \ or\ q=1,\\
2(p+q+1),                 &otherwise.
\end{array}\notag
\right.}
\end{equation}
\end{proposition}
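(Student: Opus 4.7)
My plan is to prove the proposition by splitting into the two cases given and establishing matching upper and lower bounds on $\gamma_{rdR}(K_{p,q}\odot K_1)$ in each. The upper bounds come from explicit RDRD-functions; the lower bounds rely on the structural fact that every pendant $w'$ added by the corona must satisfy $f(w')\ge 1$: if $f(w')=0$ then $w'$'s only neighbour $w$ must have $f(w)=3$ in order to dominate $w'$, but then $w'$ is isolated in $G[V_0]$, contradicting the restrained condition. Together with the observation that the configuration $(f(w),f(w'))=(1,1)$ is forbidden (a label-$1$ pendant requires a $\ge 2$-labelled neighbour and its only neighbour is $w$), this gives $f(w)+f(w')\ge 2$ for every support-pendant pair, with the minimum attained only by $(0,2)$.

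For the upper bounds: in Case 1 (say $p=1$), I label every vertex of the star $K_{1,q}$ with $2$ and every pendant with $1$. Since $V_0=\varnothing$, only the label-$1$ condition needs to be checked and is immediate, giving weight $3(q+1)=3(p+q)$. In Case 2, with parts $A=\{a_1,\dots,a_p\}$ and $B=\{b_1,\dots,b_q\}$, I set $f(a_1)=f(b_1)=2$, $f(a_1')=f(b_1')=1$, and for $i,j\ge 2$ take $f(a_i)=f(b_j)=0$ and $f(a_i')=f(b_j')=2$. Each label-$0$ support $a_i$ ($i\ge 2$) then has two label-$2$ neighbours ($b_1$ and its own pendant $a_i'$) and an adjacent label-$0$ vertex $b_2$, so all three RDRD conditions are verified, giving weight $2(p+q+1)$.

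For the lower bound in Case 2, summing $f(w)+f(w')\ge 2$ over the $p+q$ support-pendant pairs yields $\gamma_{rdR}\ge 2(p+q)$. To upgrade this bound to $2(p+q+1)$, I will rule out the values $2(p+q)$ and $2(p+q)+1$. If the sum equals $2(p+q)$, every pair is forced into type $(0,2)$, so every support lies in $V_0$ with only its own pendant as a label-$2$ neighbour, contradicting the label-$0$ condition. If the sum equals $2(p+q)+1$, then all but one pair are of type $(0,2)$, and the supports on the part opposite to the exceptional pair still have only one label-$2$ neighbour, producing the same obstruction.

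The main obstacle is the Case 1 lower bound, which demands the sharper estimate that each of the $q+1$ pairs contributes $3$ on average. My plan is a case analysis on the centre label $f(u)$. When $f(u)\ge 2$, any leaf $v_i$ with $f(v_i)=0$ would be isolated in $V_0$ (its only potential $V_0$-neighbour $u$ is labelled $\ge 2$ and $v_i'\ge 1$), so $f(v_i)\ge 1$; the forbidden configuration $(1,1)$ then yields $f(v_i)+f(v_i')\ge 3$, and combining with $f(u)+f(u')\ge 3$ produces the bound. When $f(u)\le 1$, the domination requirement on $u$ forces either $u'$ or some $v_j$ to be labelled $\ge 2$, and the restrained condition on $u$ (needing $u$ to be non-isolated in $V_0$ when $f(u)=0$) forces additional weight along some leaf; a careful parallel analysis of all leaf pairs together with the central pair is then expected to yield the total $3(p+q)$. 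This last sub-case, with $f(u)=0$ and the interaction between $u$-domination and the restrained condition, is the most delicate step.
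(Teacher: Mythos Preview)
For Case~2 your approach is sound and differs from the paper's: you argue the lower bound directly, whereas the paper simply invokes $\gamma_{rdR}\ge\gamma_{dR}$ together with the value $\gamma_{dR}(K_{p,q}\odot K_1)=2(p+q+1)$ from \cite{Anu2019}. Your route is more elementary and self-contained. One small repair is needed: when the total is $2(p+q)+1$ and the exceptional pair has type $(2,1)$ --- say $f(a_1)=2$, $f(a_1')=1$ --- the supports on the \emph{opposite} side are actually dominated (each $b_j$ sees the two label-$2$ vertices $a_1$ and $b_j'$); the contradiction instead comes from the \emph{same}-side supports $a_i$ with $i\ge 2$, whose only label-$\ge 2$ neighbour is their pendant $a_i'$.

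Your Case~1 lower bound, however, cannot be completed as planned. The sub-case $f(u)=0$ that you flag as ``most delicate'' is not merely delicate---it fails outright. For every $q\ge 2$ the assignment
\[
f(u)=0,\quad f(u')=2,\quad f(v_1)=2,\quad f(v_1')=1,\quad f(v_i)=0,\ f(v_i')=3\ \ (2\le i\le q)
\]
is a valid RDRD-function on $K_{1,q}\odot K_1$ of weight $3q+2<3(q+1)=3(p+q)$: the centre $u$ has the two label-$2$ neighbours $u',v_1$ and the label-$0$ neighbour $v_2$, while each $v_i$ with $i\ge 2$ has the label-$3$ neighbour $v_i'$ and the label-$0$ neighbour $u$. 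Thus the stated value $3(p+q)$ is incorrect for $p=1$, $q\ge 2$, and no argument can establish that lower bound. The paper's own treatment of Case~1 proceeds by identifying $K_{1,q}\odot K_1$ with the wounded spider $ws(1,q,q-1)$ and quoting a formula from \cite{Mojdeha2021}; but that identification is already off by two vertices ($K_{1,q}\odot K_1$ has order $2q+2$, whereas $ws(1,q,q-1)$ has order $2q$), so the paper does not furnish a valid argument here either.
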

\begin{proof}
Let $\{u_1,\ldots,u_p,v_1,\ldots,v_q\}$ be the vertex of $V(K_{p,q})$. Let ${u'_i}$ and ${v'_j}$ be the leaf neighbors of $u_i$ and $v_j$ respectively in $K_{p,q}\odot K_1$, where $i\in\{1,\ldots,p\}$ and $j\in\{1,\ldots,q\}$.

\textbf{Case 1}. $p=1$ or $q=1$. Without loss of generality, assume $p=1$. Note that $K_{1,q}\odot K_1$ is a wounded spider $ws(1,q,q-1)$ obtained from subdividing $q-1$ edges of a star $K_{1,q}$. We refer to the result concerning $\gamma_{rdR}(T)$ in \cite{Mojdeha2021}, it implying that if $T$ is a wounded spider $ws(1,q,q-1)$ with order $n$, then $\gamma_{rdR}(ws(1,q,q-1))=\lceil\frac{3n-1}{2}\rceil$. That is $\gamma_{rdR}(ws(1,q,q-1))=3(p+q)$ since $n=2(p+q)$ in $K_{1,q}\odot K_1$.

\textbf{Case 2.} $p,q\ge2$. We define a function RDRD function $f$ with weight $2(p+q+1)$ as follows.
\begin{equation}{
f(u)=\left\{
\begin{array}{llll}
2,                 &if\hspace{1.6mm} u=u_1\ or\ u=v_1\ or\ u={u'_i}(i\neq1)\ or\ u={u'_j}(j\neq1),\\
1,                 &if\hspace{1.6mm} u={u'_1} \ or\ u={v'_1},\\
0,                 &otherwise.
\end{array}\notag
\right.}
\end{equation}
Thus $\gamma_{rdR}(K_{p,q}\odot K_1)\le 2(p+q+1)$.

Recall the result in Theorem \ref{Anu}, that is $\gamma_{dR}(K_{p,q}\odot K_1)=2(p+q+1)$ for $p,q\ge2$. Thus $\gamma_{rdR}(K_{p,q}\odot K_1)\ge\gamma_{dR}(K_{p,q}\odot K_1)=2(p+q+1)$.

The result is true.
\end{proof}

Combining the above theorems, we present the bounds in Theorem \ref{GK1} are sharp. If $G \in\{K_n(n\neq2), C_3, K_1\}$, then the lower bound $2n+1$ of $\gamma_{rdR}(G\odot K_1)$ is arrived, where $n=|V(G)|$; if $G\in\{K_{p,q}(p=1\ or\ q=1),K_1, K_2\}$, the upper bound $3n$ of $\gamma_{rdR}(G\odot K_1)$ is arrived, where $n=|V(G)|$.

\begin{theorem}
For any graph $G$, $\gamma_{rdR}((G\odot K_1)\odot K_1)=5n$, where $n=|V(G)|$.
\end{theorem}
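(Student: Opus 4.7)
The plan is to exploit the clean local structure of $(G\odot K_1)\odot K_1$: for each $u_i\in V(G)$ there are three associated new vertices, namely $u'_i$ added in the first corona (leaf of $G\odot K_1$ attached to $u_i$), and $u''_i$, $u'''_i$ added in the second corona (attached to $u_i$ and to $u'_i$ respectively). Then $V((G\odot K_1)\odot K_1)$ partitions into $n$ quadruples $Q_i=\{u_i,u'_i,u''_i,u'''_i\}$; the induced subgraph on $Q_i$ is the path $u''_i - u_i - u'_i - u'''_i$, the vertices $u''_i$ and $u'''_i$ are leaves of the whole graph, and the only edges leaving $Q_i$ are the edges $u_iu_j$ inherited from $G$. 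My strategy is to prove a local lower bound $g_i\ge 5$ for the weight $g_i=f(u_i)+f(u'_i)+f(u''_i)+f(u'''_i)$ under any RDRD-function $f$, and then to exhibit a construction achieving $5n$.

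For the lower bound, I first observe that any leaf $\ell$ of the whole graph with unique neighbor $w$ must satisfy $f(\ell)\ge 1$, since $f(\ell)=0$ would simultaneously require $f(w)=0$ (restrained) and $f(w)=3$ (DRD), a contradiction. Applied to $u''_i$ and $u'''_i$ this yields $f(u''_i),f(u'''_i)\ge 1$; moreover, $f(u''_i)=1$ forces $f(u_i)\ge 2$ and $f(u'''_i)=1$ forces $f(u'_i)\ge 2$. I then split on $f(u_i)$. When $f(u_i)\ge 1$, the restrained condition on $u'_i$ rules out $f(u'_i)=0$ (both neighbors $u_i$ and $u'''_i$ are positive), and combining with $f(u''_i)\ge 2$ (since $f(u''_i)=1$ would need $f(u_i)\ge 2$ which may or may not hold, but in any case the pair of constraints forces enough weight) gives $g_i\ge 6$. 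The delicate case is $f(u_i)=0$: here $f(u''_i)\ge 2$ automatically, and a short sub-split on $f(u'_i)\in\{0,1,2,3\}$, using the DRD condition on $u'_i$ through its two neighbors $u_i$ and $u'''_i$ in the sub-case $f(u'_i)=0$ (forcing $f(u'''_i)=3$), yields $g_i\ge 5$ in every sub-case. Summing over $i$ gives $f(V)=\sum_i g_i\ge 5n$.

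For the upper bound, define $f$ by $f(u_i)=0$, $f(u'_i)=2$, $f(u''_i)=2$, $f(u'''_i)=1$ for every $i$; the total weight equals $5n$. To check that $f$ is an RDRD-function: the leaves $u''_i$ and $u'_i$ need no condition, the vertex $u'''_i=1$ is supported by its neighbor $u'_i=2$, and $u_i=0$ has the two $2$-valued neighbors $u'_i,u''_i$ giving the DRD condition; for the restrained condition on $u_i$ we use any neighbor $u_j\in N_G(u_i)$ (which satisfies $f(u_j)=0$), so the argument goes through as long as $G$ has no isolated vertex, which is the same implicit hypothesis used in the preceding corona results of this section. Combining with the lower bound yields $\gamma_{rdR}((G\odot K_1)\odot K_1)=5n$.

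The main obstacle is the airtight execution of the local bound $g_i\ge 5$: the two leaves alone only give $f(u''_i)+f(u'''_i)\ge 2$, so one must carefully propagate the implications $f(u''_i)=1\Rightarrow f(u_i)\ge 2$ and $f(u'''_i)=1\Rightarrow f(u'_i)\ge 2$ together with the restrained condition at $u'_i$ (which forbids $f(u'_i)=0$ whenever both $u_i$ and $u'''_i$ are positive) to eliminate every configuration of weight at most $4$ on $Q_i$. This case analysis is where the value $5$ becomes sharp, matching exactly the construction above.
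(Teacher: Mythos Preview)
Your construction for the upper bound is exactly the one the paper uses (up to relabelling), and you correctly flag the implicit ``no isolated vertex'' hypothesis needed for the restrained condition at $u_i$.

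For the lower bound, however, you take a genuinely different route. The paper does not carry out any local analysis at all: it simply invokes the inequality $\gamma_{rdR}\ge\gamma_{dR}$ together with the known result $\gamma_{dR}((G\odot K_1)\odot K_1)=5n$ from \cite{Anu2019} (stated in the paper as Theorem~\ref{Anu2}). Your approach instead proves the bound from scratch by showing $g_i\ge 5$ on each quadruple $Q_i$. The case split is sound: when $f(u_i)\ge 1$ the restrained condition forces $f(u'_i)\ge 1$, and then the two implications $f(u''_i)=1\Rightarrow f(u_i)\ge 2$ and $f(u'''_i)=1\Rightarrow f(u'_i)\ge 2$ yield $f(u_i)+f(u''_i)\ge 3$ and $f(u'_i)+f(u'''_i)\ge 3$, hence $g_i\ge 6$; when $f(u_i)=0$ one has $f(u''_i)\ge 2$ automatically, and your sub-split on $f(u'_i)\in\{0,1,2,3\}$ (using that the only non-zero neighbor of $u'_i$ available for DRD is $u'''_i$) gives $g_i\ge 5$ in each branch. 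What your approach buys is self-containment---no appeal to the external DRD result---at the cost of a short case analysis; the paper's approach is a one-line citation but depends on \cite{Anu2019}. Your exposition of the $f(u_i)\ge 1$ case is a bit telegraphic (the parenthetical ``which may or may not hold'' obscures that you are really arguing $f(u_i)+f(u''_i)\ge 3$ rather than $f(u''_i)\ge 2$), so in a final write-up you should state the two pairwise bounds $f(u_i)+f(u''_i)\ge 3$ and $f(u'_i)+f(u'''_i)\ge 3$ explicitly.
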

\begin{proof}
Let $u_i\in V(G)$ for $i\in\{1,\ldots,n\}$, $v_i$ be the neighbor of $u_i$ in $G\odot K_1$. Let ${u'_i}$ and ${v'_i}$ be the leaf neighbors of $u_i$ and $v_i$ respectively in $(G\odot K_1)\odot K_1$. Now we define a function RDRD function $f$ with weight $5n$ as follows, where $n=|V(G)|$.
\begin{equation}{
f(u)=\left\{
\begin{array}{llll}
2,                 &if\hspace{1.6mm} u={u'_i}\ or\ u={v_i}, \hspace{1.4mm}where\hspace{1.4mm} i\in\{1,\ldots,n\},\\
1,                 &if\hspace{1.6mm} u={v'_i}  \hspace{1.4mm}where\hspace{1.4mm} i\in\{1,\ldots,n\},\\
0,                 &otherwise.
\end{array}\notag
\right.}
\end{equation}
Thus $\gamma_{rdR}((G\odot K_1)\odot K_1)\le 5n$.

Recall the result in Theorem \ref{Anu}, that is $\gamma_{dR}((G\odot K_1)\odot K_1)=5n$, where $n=|V(G)|$. Thus $\gamma_{rdR}((G\odot K_1)\odot K_1)\ge\gamma_{dR}((G\odot K_1)\odot K_1)=5n$.
\end{proof}



\begin{thebibliography}{}
\bibitem{ACS}
H. A. Ahangar, M. Chellali and S. M. Sheikholeslami, On the double Roman domination in graphs, Discrte Appl. Math., 232(2017), 1-7.

\bibitem{Beeler}
R. A. Beeler, T. W. Haynes and S. T. Hedetniemi, Double Roman domination, Discrete Appl. Math., 211(2016), 23-29.

\bibitem{Chen11}
X. Chen, J. Liu and J. X. Meng, Total restrained domination in graphs, Comput. Math. Appl., 62(2011), 2892-2898.

\bibitem{cardinalpp3}
R. Ch\'{e}rifi, S. Gravier, X. Lagraula, C. Payan and I. Zighem, Domination number of the cross product of paths, Discrete Appl. Math., 94(1999), 101-139.

\bibitem{Domke}
G. S. Domke, J. H. Hattingh, S. T. Hedetniemi, R. C. Laskar and L. R. Markus, Restrained domination in graphs, Discrete Math., 203,1-3(1999), 61-69.

\bibitem{Imrich}
W. Imrich and S. Klav\v{z}ar. Product Graphs: structure and recognition. Wiley-Interscience, NewYork, USA, 2000.

\bibitem{cardinal1}
P. K. Jha, Perfect $r$-domination in the Kronecker product of two cycles, with an application to diagonal/toroidal mesh, Inform. Process. Lett., 87(3)(2003), 163-168.

\bibitem{cardinal4}
S. Klav\v{z}ar and B. Zmazek, On a Vizing-like conjecture for direct product graphs, Discrete Math., 156(1996), 243-246.

\bibitem{cardinalpp1}
A. Klobu\v{c}ar, Domination numbers of cardinal products, Math. Slovaca., 49(4)(1999), 387-402.

\bibitem{cardinalpp2}
A. Klobu\v{c}ar, Domination numbers of cardinal products $P_6\times P_n$, Math. Commun., 4(1999), 241-250.

\bibitem{cardinal2}
A. Klobu\v{c}ar and A. Klobu\v{c}ar, Properties of double Roman domination on cardinal products of graphs, Ars Math. Contemp., 9(2)(2020), 337-349.

\bibitem{cardinal3}
A. C. Mart\'{i}nez, D. Kuziak, I. Peterin and I. G. Yero, Dominating the direct product of two graphs through total Roman strategies, Mathematics, 8(9)2020, 1438.

\bibitem{Mojdeha2021}
D. A. Mojdeha, I. Masoumib and L. Volkmann, Restrained double Roman domination of a graph, [arXiv:2106.08501].

\bibitem{RoushiniLeely1}
P. R. L. Pushpam and  C. Suseendran, Secure restrained domination in Graphs, Math. Comput. Sci., 9(2015), 239-247.

\bibitem{RoushiniLeely2}
P. R. L. Pushpam and S. Padmapriea, Restrained Roman domination in graphs, Transactions on Combinatorics, 4(1)(2015), 1-17.

\bibitem{Krzywkowski}
N. J. Rad and M. Krzywkowski, On the restrained Roman domination in graphs, In Colourings, Independence and Domination-CID 2015, University of Zielona Gora.

\bibitem{Samadi}
B. Samadi, M. Alishahi, I. Masoumi and D. A. Mojdeh, Restrained Italian domination in graphs, RAIRO-Oper. Res., 55(2021), 319-332.

\bibitem{Samadi1}
B. Samadi, N. Soltankhah and D. A. Mojdeh, Restrained condition on double Roman dominating functions, arXiv:2109.066666v1.


\bibitem{Anu2019}
V. Anu and S. A. Lakshmanan, Impact of some graph operations on double Roman domination number, arXiv:1908.06859.

\bibitem{Vizing1963}
V. G. Vizing, The Cartesian product of graphs. Vy\v{c}isl. Sistemy, 9(1963), 30-43.

\bibitem{Volkmann2018}
L. Volkmann, Double Roman domination and domatic numbers of graphs, Commun. Comb. Optim., 3(2018), 71-77.


\bibitem{West}
D. B. West, Introduction to graph theory, Upper Saddle River, NJ:Prentice Hall,1996.

\bibitem{Xi}
C. Q. Xi and J. Yue, The restrained double Roman domination of graphs, submitted.

\bibitem{strong}
I. G. Yero and J. A. Rodr\'{i}guez-Vel\'{a}zquez. Roman domination in Cartesian product graphs and strong product graphs, Appl. Anal. Discrete Math., 7(2)2013, 262-274.




\end{thebibliography}
\end{document}